\theoremstyle{plain}
\newtheorem{theorem}{Theorem}[section]
\newtheorem{lemma}[theorem]{Lemma}
\newtheorem{proposition}[theorem]{Proposition}
\newtheorem{corollary}[theorem]{Corollary}
\newtheorem{remark}[theorem]{Remark}
\newtheorem{definition}[theorem]{Definition}
\title{Sequential motion planning assisted by group actions}
\author{Emmett L. Balzer}
\author{Enrique Torres-Giese}
\address{Trinity Western University, Langley BC, V2Y 1Y1 , Canada.}
\email{emmett.balzer@mytwu.ca}
\email{enrique.torresgiese@twu.ca}
\begin{document}
\begin{abstract} 
We study higher analogues of effective and effectual topological complexity of spaces equipped with a group action. 
These are $G$-homotopy invariant and are motivated by the (higher) motion planning 
problem of $G$-spaces for which their group action is thought of as an external system assisting the motion planning. 
Related to this interpretation we define what we call orbital topological complexity, which is also a $G$-homotopy invariant that 
provides an upper bound for the topological complexity of the quotient space by the group action. We apply these concepts 
to actions of the group of order two on orientable surfaces and spheres.
\end{abstract}

\maketitle


\section{Introduction}

The study of the motion planning problem in topological spaces has been approached from a variety of perspectives 
that seek to take into account different facets of the problem such as the number of target points that must be visited, 
the symmetry of the motion planners, and the compatibility of the motion planners with inherent symmetries of the space where the motion occurs. 
In addition to these perspectives, we can also consider the motion planning problem of spaces equipped with a system of symmetries that can be 
incorporated into the motion planners. This latter situation is interpreted as having a topological space endowed with a group action. In terms of robotics, 
when considering the motion planning problem in a topological $G$-space $X$, we can think of the action of the group $G$ as an external system working on
$X$ that is meant to facilitate the motion planning. For example, if we consider $X = \mathbb{R}^2 - \{ (0,0) \}$ and the Lie group 
$O(2)$ acting on $X$ by rotations, then this action can be thought of as a crane revolving 
around the origin that lifts up an object, rotates, and then delivers the object in a different location within its $O(2)$-orbit. 
This interpretation has given rise to different flavors of the notion of topological complexity, and among these we have two that are
called effective and effectual topological complexity, defined in \cite{bk} and \cite{cggz}.  Recall that the (higher) $n$-th topological complexity (TC from now on) 
of a space $X$ is the sectional category of the multi-evaluation map 
\begin{equation}\label{evaluation_map_for_regular}
e_n :  M_n(X) \to X^n 
\end{equation}
where $M_n(X)$ stands for the space of multipaths in $X$ (see \cite{bgrt} and \cite{r}). 

In order to incorporate the action of the group $G$ on $X$ into the notion of TC, the effective TC 
replaces the space $M_2(X)$ by a space of ``broken" paths that can be glued when the action of $G$ is taken into account, and therefore produce 
a way of traveling from a source to a target point in $X$. On the other hand, the concept of effectual TC considers 
planners that connect the source point with a point in the $G$-orbit of the target point. 
The effective and effectual TC are denoted by $\mathsf{TC}^{G}_{\mathsf{effv}}(X)$ and $\mathsf{TC}^{G}_{\mathsf{effl}}(X)$ respectively and 

for free actions of finite groups 
they are related to the TC of the quotient space $X/G$ by the following inequalities (see~\cite{cggz}): 
\begin{equation}\label{inequality_of_three}
\mathsf{TC}^G_{\mathsf{effv}}(X) \leq \mathsf{TC}^G_{\mathsf{effl}}(X) \leq \mathsf{TC}_2(X/G).
\end{equation}

In this paper, we will provide natural generalizations of these concepts into the context of higher TC by
introducing an appropriate concept of broken multipaths for the case of effective TC, and more target $G$-orbits for the case
of effectual TC. The corresponding concepts are denoted by $\mathsf{TC}^{G}_{\mathsf{\mathsf{effv}},n}(X)$ and $\mathsf{TC}^{G}_{\mathsf{effl},n}(X)$. We also
introduce a third concept that we call orbital TC, denoted by $\mathsf{TC}^G_{\mathsf{orb},n}(X)$. This orbital TC
is motivated by the motion planning problem that considers multipaths in $X$ that reach points in the $G$-orbits of the desired target points. 
As in (\ref{inequality_of_three}), for free actions of finite groups these higher versions of TC are related as follows 
\begin{equation}\label{inequality_of_three_higher}
\mathsf{TC}^G_{\mathsf{effv},n}(X) \leq \mathsf{TC}^G_{\mathsf{effl},n}(X) \leq \mathsf{TC}_n(X/G) \leq \mathsf{TC}^G_{\mathsf{orb},n}(X).
\end{equation}

To further explore the concept of orbital TC and the relationship (\ref{inequality_of_three_higher}), we consider the antipodal and reflection actions on 
the sphere $S^m$ and three actions of $\mathbb{Z}_2$ on orientable surfaces: 
\begin{enumerate}
    \item reflection (for effective TC only),
    \item rotation (for odd genus surfaces), and
    \item antipodal.
\end{enumerate}

We summarize our calculations as follows:

\begin{theorem}
Let $S^m$ the $m$-dimensional sphere with $m\geq 1$, $\Sigma_g$ be a compact closed orientable surface of genus $g\geq 1$, 
and $\delta \in \{ 0, 1\}$. We have:
\[
\def\arraystretch{1.2}
\begin{array}{|l|c|c|c|}
\hline
 & \mathsf{TC}^{\mathbb{Z}_2}_{\mathsf{effv},n} & \mathsf{TC}^{\mathbb{Z}_2}_{\mathsf{effl},n} & \mathsf{TC}^{\mathbb{Z}_2}_{\mathsf{orb},n}\\
\hline
\mathsf{Reflection} & & &   \\
S^m & n & & \\
\Sigma_1 & 2n-1 & &   \\
\Sigma_g \ \ (g\geq 2) & 2n +\delta  & &   \\
\hline
\mathsf{Rotation} & & &   \\
\Sigma_1 & 2n-1 & 2n-1 & 2n+\delta  \\
\Sigma_{2l+1} \ \ (l\geq 1) & 2n+1 & 2n+1 & 2n+1  \\
\hline
\mathsf{Antipodal} & & &   \\
S^m & n & (n-1)m +1 + \delta  & nm+1\\
\Sigma_1 & 2n - 1 & 2n & 2n + 1  \\
\Sigma_g \ \ (g\geq 2)  & 2n + \delta & 2n + \delta & 2n + 1  \\
\hline
\end{array}
\]
\end{theorem}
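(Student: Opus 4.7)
The plan is to treat the table row-by-row using the inequality chain (\ref{inequality_of_three_higher}) as the organizing scaffold. For each action I would first identify the orbit space $X/G$, compute (or cite) its higher topological complexity $\mathsf{TC}_n(X/G)$, and then squeeze the three equivariant invariants between cohomological lower bounds and explicit motion planners. Concretely, I expect to use: for spheres, $\mathsf{TC}_n(S^m) = n$ or $n-1$ depending on parity of $m$, and $\mathsf{TC}_n(\mathbb{RP}^m)$ computed by Farber--Tabachnikov--Yuzvinsky-type arguments in terms of immersion dimension; for surfaces, the classical values $\mathsf{TC}_n(\Sigma_g) = 2n$ for $g \geq 2$, together with $\mathsf{TC}_n(\Sigma_1) = 2n-1$ on the torus.

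For the upper bounds, I would construct explicit partitions of $X^n$ (or of the space of multipaths) into open sets on which sections exist. In the reflection case on $S^m$ the fixed $(m-1)$-sphere provides a natural hemispherical decomposition that yields broken multipaths gluable via the $\mathbb{Z}_2$-action, giving $\mathsf{TC}^{\mathbb{Z}_2}_{\mathsf{effv},n}(S^m) \leq n$. For the antipodal actions on $S^m$ and on odd-genus $\Sigma_g$, I would lift a motion planner from the quotient $\mathbb{RP}^m$ or $N_{g+1}$ and use the double cover to produce effectual and orbital sections, with the orbital invariant getting a contribution of $+1$ compared to the plain $\mathsf{TC}_n$ of the quotient because one must additionally choose a representative in each of the $n$ target orbits. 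For the rotation on $\Sigma_{2l+1}$, the quotient is an orientable surface of lower genus, which reduces the computation to the classical surface case.

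For the lower bounds I would use the zero-divisor cup-length approach tailored to each invariant: the effective and effectual TC admit cohomological obstructions via the maps from Cartesian powers of $X$ (respectively, orbit powers) to the relevant fibration total spaces, and for orbital TC one can bound below by $\mathsf{TC}_n(X/G)$ via the second-to-last inequality in (\ref{inequality_of_three_higher}). The $\delta \in \{0,1\}$ ambiguity in the surface rows should come from the two standard conventions for reduced versus unreduced sectional category, which I would fix once at the start and propagate consistently. In the rotation case on $\Sigma_{2l+1}$ with $l\geq 1$ the upper and lower bounds coincide at $2n+1$, confirming the table; for $\Sigma_1$ the coincidence at $2n-1$ reflects that the torus is an abelian Lie group, so the rotation action is homotopically invisible.

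The hard part, I expect, is the orbital TC for the antipodal action on $S^m$, where the target value $nm+1$ strictly exceeds $\mathsf{TC}_n(\mathbb{RP}^m)$ and forces a genuinely new lower bound argument that is not merely a pullback of a zero-divisor from the quotient. Here I would try to exhibit a non-zero $n$-fold orbit cup-product in $H^*((S^m)^n; \mathbb{Z}/2)$ coming from the map to $(\mathbb{RP}^m)^n$, using the fact that each factor contributes its own Stiefel--Whitney class; the combinatorics of ``orbits of target points'' should force one extra dimension beyond $\mathsf{TC}_n(\mathbb{RP}^m) = nm$, accounting for the $+1$. A parallel but slightly easier analysis, using surface cohomology rather than $\mathbb{RP}^m$, should then handle the orbital row on $\Sigma_g$. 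The matching upper bound would come from an explicit cover of $(S^m)^n$ by $nm+2$ open sets carrying orbital sections, modelled on Farber's classical construction for $\mathbb{RP}^m$ but adapted to the multi-target orbital setting.
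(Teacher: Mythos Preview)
Your overall scaffold (inequality chain plus zero-divisor lower bounds plus explicit planners) matches the paper's approach, but there are two genuine gaps.

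First, you have misread what $\delta$ means. It is \emph{not} the reduced/unreduced convention shift; the paper fixes the unreduced convention throughout. The entries $2n+\delta$ and $(n-1)m+1+\delta$ record a genuine one-unit gap between the cohomological lower bound and the upper bound that the paper was unable to close. So your plan to ``fix $\delta$ once at the start and propagate consistently'' will not work: these are open cases, and the paper says explicitly that resolving them would require obstruction theory or new planners. Relatedly, several of your remembered values are off by one in this convention (e.g.\ $\mathsf{TC}_n(\Sigma_g)=2n+1$ for $g\ge 2$, not $2n$), and your scaffold must also be adjusted for the reflection rows: that action is not free, so inequality~(\ref{inequality_of_three_higher}) is unavailable and only the effective invariant is considered.

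Second, you have the difficulty distribution inverted. The orbital value $\mathsf{TC}^{\mathbb{Z}_2}_{\mathsf{orb},n}(S^m)=nm+1$ is easy in the paper: the upper bound is just $\mathsf{cat}((\mathbb{R}P^m)^n)=nm+1$, and the lower bound comes from the obvious product $p_1^*(\alpha)^m\cdots p_n^*(\alpha)^m$ of pulled-back generators, which lies in $\ker(\pi^n)^*$ because $H^1(S^m;\mathbb{Z}_2)=0$. No ``genuinely new lower bound argument'' is needed. The part that is actually hard, and that your proposal does not address, is the exact value $\mathsf{TC}^{\mathbb{Z}_2}_{\mathsf{effl},n}(\Sigma_1^{\mathsf{ant}})=2n$: the cohomological lower bound gives $2n$, but the upper bound from $\mathsf{TC}_n(N_2)$ is $2n+1$, so the paper must build an explicit effectual motion planner on $\Sigma_1\times(K_2)^{n-1}$ with exactly $2n$ domains. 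That construction (a careful extension of the $n=2$ planner from~\cite{cggz} using half-torus regions and a ``characteristic tuple'' bookkeeping device) is the longest proof in the paper, and nothing in your outline produces it.
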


The previous result extends the calculations when $n=2$ available in \cite{bk} and \cite{cggz}. 
The value of $\delta$ can be settled in some cases, for instance, we know that $\mathsf{TC}_n(\mathbb{R}P^m)=m(n - 1)+1$ when $m\in \{1,2,3\}$,
and according to (\ref{inequality_of_three_higher}) this implies that $\mathsf{TC}^{\mathbb{Z}_2}_{\mathsf{effl},n}(S^m) = \mathsf{TC}_n(\mathbb{R}P^m)$ 
when $m\in \{1,2,3\}$.  In general, settling the value(s) of $\delta$ would require devising an appropriate motion planner forcing $\delta = 0$, or the use of 
obstruction theory (that will appear elsewhere). Returning to the antipodal action on the sphere, the inequalities in (\ref{inequality_of_three_higher}) become: 
\[ 
\mathsf{TC}^{\mathbb{Z}_2}_{\mathsf{effv},n}(S^m) \leq \mathsf{TC}^{\mathbb{Z}_2}_{\mathsf{effl},n}(S^m) \leq 
\mathsf{TC}_n(\mathbb{R}P^m) \leq \mathsf{TC}^{\mathbb{Z}_2}_{\mathsf{orb},n}(S^m).
\]
For these spaces, the first of the inequalities is always strict. However, $\mathsf{TC}_n(\mathbb{R}P^m)$ could be equal $\mathsf{TC}^{\mathbb{Z}_2}_{\mathsf{effl},n}(S^m)$ 
as we already mentioned; and $\mathsf{TC}_n(\mathbb{R}P^m)$ could be also equal to $\mathsf{TC}^{\mathbb{Z}_2}_{\mathsf{orb},n}(S^m)$, for
instance when $m= 2^e$ and $n\geq 3$ (see ~\cite{cgggl}). This latter implies that the last two inequalities in (\ref{inequality_of_three_higher}) are sharp. 
There are instances when (\ref{inequality_of_three_higher}) yields strict inequalities:
\[ 
\mathsf{TC}^{\mathbb{Z}_2}_{\mathsf{effv},2}(S^4)  = 2,\hspace{0.5cm} \mathsf{TC}^{\mathbb{Z}_2}_{\mathsf{effl},2}(S^4)=5+\delta,\hspace{0.5cm} 
\mathsf{TC}_2(\mathbb{R}P^4) = 8,\hspace{0.5cm} \mathsf{TC}^{\mathbb{Z}_2}_{\mathsf{orb},2}(S^4) = 9.
\]
The third value above follows from the celebrated equality $\mathsf{TC}_2(\mathbb{R}P^m) = \mathsf{Imm}(\mathbb{R}P^m)+1$ when $m\neq 1,3,7$ (here 
$\mathsf{Imm}(\mathbb{R}P^m)$ is the immersion dimension of $\mathbb{R}P^m$, which is roughly $2m$ minus twice the number of ones in the dyadic expansion of $m$). 
As we have seen, the effectual and the orbital TC serve as approximations to the TC of the quotient space, and it is plausible that both of these approximations may yield the
same value as the TC of the quotient space, but we do not know an example of this yet.

The organization of this paper is as follows. In Sections \ref{higher_effv} and \ref{Higher_effl_orb} we define the higher effective, effectual and orbital TC. Then in Section \ref{properties}
we present general properties of these invariants, and in Sections \ref{spheres} and \ref{surfaces} we study these invariants for spheres and surfaces. We also include an
appendix with a motion planner for euclidean spaces minus a number of points. 

The spaces that we will consider in this paper will all be Hausdorff, path-connected, and locally path-connected; and the groups
acting on these spaces will all be finite and assumed to act freely. All the results in this paper concerning TC, Lusternik-Schnirelmann category, and sectional category are non-reduced.


\section{Higher Effective Topological Complexity}\label{higher_effv}

Given a $G$-space $X$, there are two closely related approaches that define ``effective'' TC of $X$: 
the construction of \cite{bk} that uses the space of broken paths $P^{G,n}(X)$ that can be patched with elements in $G$; 
and the construction of \cite{cggz} that uses the space $P^G_n(X)$ consisting of broken paths that can be patched with
elements in $G$ along with these patching data. We will make these approaches more precise in the following two 
subsections where we introduce their higher analogues.

\subsection{Higher Effective TC with patching data}
To propose a higher version of effective topological complexity we must consider a suitable replacement of the space of multipaths. 
We do so by means of the following definition.

\begin{definition} Let $M^G_n(X)$ be the subspace of $PX\times G \times PX\times\cdots\times PX\times G\times PX$ consisting of tuples $(\alpha_1,g_1,\alpha_2,g_2,\ldots,\alpha_{n-1},g_{n-1},\alpha_n)$ such that $\alpha_i(0)g_i = \alpha_{i+1}(0)$ for $1\leq i \leq n-1$.
\end{definition}

An element in the space $M^G_n(X)$ consists of an $n$-tuple of paths $\alpha_1,\ldots,\alpha_n$ in $X$ emanating from the $G$-orbit of a point in $X$ along with patching data provided by the group elements $g_1,\ldots,g_{n-1}$. Note that we have an evaluation map
\begin{equation}\label{evaluation_map_for_effective}
\epsilon_n : M^G_n(X) \to X^n.
\end{equation}
given by $(\alpha_1,g_1,\ldots,g_{n-1},\alpha_n) \mapsto (\alpha_1(1),\ldots,\alpha_n(1))$. 
This a natural extension of the space $M_n(X)$ since when $G$ is the trivial group the space $M^G_n (X)$ is precisely the space of 
multipaths $M_n(X)$.

\begin{proposition} The evaluation function $\epsilon_n : M^G_n X \to X^n$ is a fibration. Moreover, its sectional category is the 
same as that of the saturated diagonal map $\Delta_G^n : X \times G^{n-1} \to X^n$ given by 
$(x,g_1,\ldots,g_{n-1}) \mapsto (x, xg_1,xg_1g_2\ldots,xg_1\cdots g_{n-1})$. 
\end{proposition}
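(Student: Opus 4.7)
The plan is to split the statement into its two assertions and treat them in turn, using a single geometric observation: the space $M^G_n(X)$ retracts onto its subspace of constant tuples, and this subspace is a homeomorphic copy of $X \times G^{n-1}$ over which $\epsilon_n$ restricts to $\Delta^n_G$.

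First, to prove that $\epsilon_n$ is a fibration, I would verify the homotopy lifting property directly, or more efficiently identify $\epsilon_n$ with the standard fibrational replacement of $\Delta^n_G$. Concretely, given $(\alpha_1,g_1,\ldots,g_{n-1},\alpha_n) \in M^G_n(X)$, the patching condition $\alpha_i(0) g_i = \alpha_{i+1}(0)$ forces $\alpha_i(0) = \alpha_1(0)\, g_1 \cdots g_{i-1}$. Hence the assignment $(\alpha_1,g_1,\ldots,g_{n-1},\alpha_n) \mapsto (\alpha_1(0), g_1,\ldots,g_{n-1},\alpha_1,\ldots,\alpha_n)$ identifies $M^G_n(X)$ with
\[
E_{\Delta^n_G} = \{(x,g_1,\ldots,g_{n-1},\gamma_1,\ldots,\gamma_n) : \gamma_i(0) = xg_1\cdots g_{i-1}\},
\]
which is precisely the mapping path space of $\Delta^n_G$. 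Under this identification $\epsilon_n$ is the usual endpoint evaluation $\gamma \mapsto \gamma(1)$ on each factor, which is a Hurewicz fibration by standard arguments (products of Hurewicz fibrations remain fibrations, and the patching data merely restricts to a closed subspace on which the lifting can be performed componentwise because the $g_i$ do not change along the lift).

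For the sectional category claim, I would exhibit the inclusion
\[
\iota : X \times G^{n-1} \hookrightarrow M^G_n(X), \qquad (x,g_1,\ldots,g_{n-1}) \mapsto (c_{x},g_1,c_{xg_1},g_2,\ldots,g_{n-1},c_{xg_1\cdots g_{n-1}}),
\]
where $c_y$ denotes the constant path at $y$. A straightforward homotopy $H_s(\alpha_1,g_1,\ldots,g_{n-1},\alpha_n) = (\alpha_1^s,g_1,\ldots,g_{n-1},\alpha_n^s)$ with $\alpha_i^s(t) = \alpha_i((1-s)t)$ deformation retracts $M^G_n(X)$ onto the image of $\iota$; the $g_i$ stay fixed and each $\alpha_i^s(0) = \alpha_i(0)$, so the patching condition persists along the entire retraction. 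Thus $\iota$ is a homotopy equivalence, and by construction $\epsilon_n \circ \iota = \Delta^n_G$ on the nose.

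Finally I would invoke the standard fact that if $\iota$ is a homotopy equivalence fitting into a strictly commutative triangle with $\epsilon_n$ a fibration and $\Delta^n_G$ an arbitrary map, then $\mathrm{secat}(\epsilon_n) = \mathrm{secat}(\Delta^n_G)$ (the latter defined via any fibrational replacement). The main technical subtlety, and the part that warrants the most care, is verifying that the retraction respects the patching condition at every intermediate time $s$; once that is in place the two assertions follow from standard sectional category machinery rather than any delicate computation.
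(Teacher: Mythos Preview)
Your proposal is correct and follows essentially the same route as the paper: both identify $M^G_n(X)$ with the standard mapping path space of $\Delta^n_G$ via the assignment $(\alpha_1,g_1,\ldots,g_{n-1},\alpha_n)\mapsto(\alpha_1(0),g_1,\ldots,g_{n-1},\alpha_1,\ldots,\alpha_n)$, which the paper packages as a homeomorphism $\mu$ onto the pullback $Q$ in a single diagram. Your explicit deformation retraction onto constant tuples is not needed once that identification is made, since the equality of sectional categories is then immediate from the definition of $\mathrm{secat}$ for a non-fibration; but it does no harm.
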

\begin{proof}
Consider the following diagram.
\[
\xymatrix{
M^G_n(X) \ar[rr]^\mu & & Q \ar[d]  \ar[rrd]^{ev_1} \ar[rr] & & P(X^n) \ar[d]^{ev_0} \\
 & & X \times G^{n-1} \ar[rr]_{\Delta_G^n} & & X^n .
}
\]
The square is the classical construction to replace the saturated diagonal by a fibration, and the map $\mu$ is a homeomorphism, 
which is given by 
\[(\alpha_1,g_1,\ldots,g_{k-1},\alpha_n) \mapsto (\alpha_1(0), g_1,\ldots,g_{n-1},\alpha_1,\ldots,\alpha_n)\]
with inverse $(x,g_1,\ldots,g_{n-1},\alpha_1,\ldots,\alpha_n) \mapsto (\alpha_1,g_1,\alpha_2,\ldots,g_{n-1},\alpha_n)$.
The evaluation map $\epsilon_n$ is the composition $\mu$ followed by $ev_1$. 
The result follows.
\end{proof}

Recall that the sectional category $\mathsf{secat}(p)$ of a fibration $p:E\to B$ is the smallest number of open sets $U_1,\ldots,U_n$ that cover $B$ and
such that there is a section $s_i$ of $p$ on each $U_i$.

\begin{definition}
The $n$-th effective topological complexity of $X$, denoted by $\mathsf{TC}^G_{\mathsf{effv},n}(X)$, is the sectional category of
the evaluation map $\epsilon_n : M^G_n(X) \to X^n$.
\end{definition}

In terms of the motion planning problem, a local section of $\epsilon_n$ over $U\subseteq X^n$ provides an algorithm to visit each 
component of a tuple of points in $U$ using paths in $X$ that emanate from the $G$-orbit of a point in $X$. 
The patching information provided by the group elements can be encoded in different ways. For instance,
we could consider the space $\mathcal{M}^G_n(X)$ consisting of tuples
$(\alpha_1,g_1,\ldots,g_{n-1},\alpha_n)$ such that $\alpha_1(0)g_i = \alpha_{i+1}(0)$ for $1\leq i\leq n-1$. The space $\mathcal{M}^G_k$ 
is equipped with an evaluation map $\epsilon'_n:\mathcal{M}^G_n(X) \to X^n$. It is easy to see that there is a homeomorphism 
$\phi:\mathcal{M}^G_k\to M^G_k$ that makes the following diagram commutative
\[
\xymatrix{
\mathcal{M}^G_n(X) \ar[rd]_{\epsilon'_n} \ar[rr]^\phi & & M^G_n(X) \ar[ld]^{\epsilon_n} \\
 & X^n &
}
\]
Therefore, both maps $\epsilon_n$ and $\epsilon'_n$ have the same sectional category. Moreover, it is easy to see that there is a homeomorphism 
between $M^G_n(X)$ and $M_n(X)\times G^{n-1}$ that could be used to carry the patching data in different ways.  

Under the homeomorphism $M^G_n(X) \cong M_n(X)\times G^{n-1}$, any restriction of the saturated diagonal to a copy of $M_nG$ different from
$M_n(X)\times \{ (1,\ldots,1)\}$, will be called a twisted diagonal.

\begin{remark} \normalfont
The definition of $\mathsf{TC}^G_{\mathsf{effv},n}(X)$ is compatible with that of \cite{cggz} when $n=2$. There are only two minor differences. 
The first is that the effective topological complexity of \cite{cggz} is in terms of broken paths consisting of sequences of paths, but in \cite[Proposition 2.5]{cggz} 
it is proved that it suffices to take broken paths with only two components. 
The second occurs in \cite[Proposition 2,2]{cggz} where they use a ``twisted" evaluation map in order to show that the map that they use to 
define effective topological complexity is indeed a fibration. In our case this is not necessary because we are working with broken multipaths
emanating from a $G$-orbit and we are evaluating them at 1, while \cite{cggz} considers broken paths with two components, the first of which has 
its end point in the $G$-orbit of the start point of the second component -- this is precisely what demands the use of the twisted evaluation map.
\end{remark}

\subsection{Effective TC without patching data}

We can also define a notion of effective topological complexity without the patching data, as in~\cite{bk} when $n=2$.
The space that we will consider here is the space $M^{G,n}(X)$ consisting of broken multipaths emanating from the same 
$G$-orbit (without the patching group elements). More precisely, 
$M^{G,n}(X) = \{ (\alpha_1,\ldots,\alpha_n)\in (PX)^n: \alpha_i(0)G = \alpha_j(0)G \}$.
Note that we have an evaluation map $\tilde{\epsilon}_n: M^{G,n}(X) \to X^n$ given by $(\alpha_1,\ldots,\alpha_n)\mapsto (\alpha_1(1),\ldots,\alpha_n(1))$. 
The map $F:M^G_n(X) \to M^{G,n}(X)$ that forgets the patching data makes the following diagram commute.
\[
\xymatrix{
M^{G,n}(X) \ar[rr]^F \ar[rd]_{\tilde{\epsilon}_n} & & M^G_n(X) \ar[ld]^{\epsilon_n} \\
 & X^ n &
}
\]
The sectional category of $\tilde{\epsilon}_n$ would define a notion of higher effective topological complexity without patching data (to
see that $\tilde{\epsilon}_n$ is a fibration one can apply the argument of \cite[Proposition 3.7]{lm}). 
Note that this latter is less than or equal to $\mathsf{TC}^G_{\mathsf{effv},n}(X)$. We are not going to explicitly define this version of topological complexity 
to avoid overloading the notation. However, we will note that this notion agrees with effective topological complexity with patching data when the action of $G$ on $X$ is principal.

\begin{lemma}
Let $\pi_i:G^{n-1} \to G$ be the projection onto the $i$-th factor.
If the action of $G$ on $X$ is principal, then the function
$\tau_n:\Delta^G_n(X) \to G^{n-1}$ such that $x_i \pi_i\circ\tau_n(x_1,\ldots,x_n) = x_{i+1}$ for $1\leq i \leq n-1$, is continuous.
\end{lemma}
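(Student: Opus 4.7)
The plan is to realize $\tau_n$ coordinate-wise using the continuous ``division'' map guaranteed by the hypothesis that the $G$-action on $X$ is principal. First I would invoke the standard characterization that principality means the shear map $\sigma : X \times G \to X \times X$, $(x, g) \mapsto (x, xg)$, is a homeomorphism onto its image $X \times_{X/G} X = \{(x, y) \in X \times X : xG = yG\}$. Composing $\sigma^{-1}$ with the projection $X \times G \to G$ then yields a continuous map $d : X \times_{X/G} X \to G$ uniquely characterized by $x \cdot d(x, y) = y$; this packages both existence (from $\sigma^{-1}$) and uniqueness (from freeness) of the group element effecting an orbit transition.

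Next I would observe that any tuple $(x_1, \ldots, x_n) \in \Delta^G_n(X)$ has all coordinates in a common $G$-orbit, so every consecutive pair $(x_i, x_{i+1})$ lies in $X \times_{X/G} X$ and $d(x_i, x_{i+1})$ is well-defined. The map $\tau_n$ is then described explicitly by
\[
\tau_n(x_1, \ldots, x_n) = \bigl(d(x_1, x_2),\, d(x_2, x_3),\, \ldots,\, d(x_{n-1}, x_n)\bigr),
\]
and each coordinate is the composition of the restriction of the projection $X^n \to X \times X$ onto the $(i, i+1)$ factors (which lands in $X \times_{X/G} X$ by the preceding remark) with $d$. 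Each coordinate is therefore continuous, and hence so is $\tau_n$. The defining equation $x_i \cdot (\pi_i \circ \tau_n)(x_1, \ldots, x_n) = x_{i+1}$ is immediate from the characterizing property of $d$.

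The main obstacle, such as it is, is purely linguistic: once principality is translated into the existence of the continuous division map $d$, the rest of the argument is a formality. For a merely free action one would still have a well-defined set-theoretic $\tau_n$ but no a priori continuity, and principality is exactly what is needed to upgrade this pointwise assignment to a continuous one; no further hypotheses on $X$ or $G$ beyond the paper's standing assumptions are required.
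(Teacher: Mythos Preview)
Your argument is correct and is essentially the paper's own proof with the notation unpacked: your division map $d$ is precisely the paper's $\tau_2$, and your coordinate-wise formula $\tau_n = (d\circ\pi_{1,2},\ldots,d\circ\pi_{n-1,n})$ is exactly the content of the commutative diagram the paper draws to conclude continuity of each $\pi_i\circ\tau_n$. The only difference is expository: you spell out explicitly that principality furnishes a continuous $d$, while the paper simply asserts that $\tau_2$ is continuous ``by hypothesis.''
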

\begin{proof}
Recall that $\Delta^G_n(X) = \{(x,xg_1,\ldots,xg_1\cdots g_{n-1}) : x\in X, g_i\in G \} \subseteq X^n$.
By hypothesis, we know that the action of $G$ on $X$ is free and that $\tau_2$ is continuous. The continuity of 
$\tau_n$ follows from the following commutative diagram.
\[
\xymatrix{
\Delta^G_n(X) \ar[d]^{\tau_n} \ar[rr]^{\pi_{i,i+1}} & & \Delta^G_2(X) \ar[d]^{\tau_2} \\
X^n \ar[rr]^{\pi_i} & & X
}
\]
\end{proof}

\begin{lemma}
If the action of $G$ on $X$ is principal, then the map $F:M^G_n(X) \to M^{G,n}(X)$ is a homeomorphism.
\end{lemma}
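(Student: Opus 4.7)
The plan is to exhibit an explicit inverse for $F$ and show it is continuous using the previous lemma. First I would check that $F$ is a continuous bijection. Continuity is immediate since $F$ is just the restriction of the projection $(PX)^{n-1}\times G^{n-1} \times PX \to (PX)^n$ that drops the group coordinates (after reordering). Injectivity follows because the action is free: if two preimages $(\alpha_1,g_1,\ldots,g_{n-1},\alpha_n)$ and $(\alpha_1,g'_1,\ldots,g'_{n-1},\alpha_n)$ share the same underlying multipath, then $\alpha_i(0)g_i = \alpha_{i+1}(0) = \alpha_i(0)g'_i$ forces $g_i = g'_i$ by freeness. Surjectivity is the observation that for any $(\alpha_1,\ldots,\alpha_n)\in M^{G,n}(X)$ the starting points lie in a single $G$-orbit, so $g_i$ with $\alpha_i(0)g_i = \alpha_{i+1}(0)$ exist (and are unique).

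Next I would write down the candidate inverse $F^{-1}\colon M^{G,n}(X)\to M^G_n(X)$ sending $(\alpha_1,\ldots,\alpha_n)$ to $(\alpha_1,g_1,\alpha_2,g_2,\ldots,g_{n-1},\alpha_n)$, where the tuple $(g_1,\ldots,g_{n-1})$ is the one determined above. To make this into a continuous formula, I would factor $F^{-1}$ through the starting-point evaluation
\[
\mathrm{ev}_0^{\,n}\colon M^{G,n}(X) \to \Delta^n_G(X), \qquad (\alpha_1,\ldots,\alpha_n)\mapsto(\alpha_1(0),\ldots,\alpha_n(0)),
\]
which is continuous because each coordinate evaluation $PX\to X$ is. Composing with the continuous translation function $\tau_n\colon \Delta^n_G(X)\to G^{n-1}$ from the previous lemma yields a continuous assignment $(\alpha_1,\ldots,\alpha_n)\mapsto(g_1,\ldots,g_{n-1})$.

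Finally, $F^{-1}$ is the map obtained by interleaving the continuous assignment $(\alpha_1,\ldots,\alpha_n)\mapsto(g_1,\ldots,g_{n-1})$ with the identity on $(\alpha_1,\ldots,\alpha_n)$, so it lands in $PX\times G\times PX\times \cdots\times G\times PX$ continuously; the defining relations $\alpha_i(0)g_i=\alpha_{i+1}(0)$ are satisfied by construction, so it lands in $M^G_n(X)$. Thus $F^{-1}$ is continuous and $F$ is a homeomorphism.

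The only nontrivial ingredient is the continuity of $\tau_n$, but that is exactly what the previous lemma provides, so the main conceptual step has already been carried out. The proof is therefore a routine verification, and I do not expect a genuine obstacle; the bookkeeping of indices in the interleaving is the most error-prone part.
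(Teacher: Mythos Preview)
Your proposal is correct and follows essentially the same approach as the paper: both construct the inverse of $F$ by composing the starting-point evaluation $(\alpha_1,\ldots,\alpha_n)\mapsto(\alpha_1(0),\ldots,\alpha_n(0))$ with the translation function $\tau_n$ from the preceding lemma to recover the group elements. The paper simply writes down the formula for $F^{-1}$ in one line, while you supply the surrounding verifications (bijectivity of $F$, continuity of the interleaved map), but the key idea is identical.
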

\begin{proof}
For simplicity of notation, we will write $\bar{\alpha}(0)$ to stand for $(\alpha_1(0),\ldots,\alpha_n(0))$, where $\alpha_j \in P(X)$. 
The inverse of $F$ is given by 
$(\alpha_1,\ldots,\alpha_n)\mapsto (\alpha_1,\pi_1\circ\tau_n(\bar{\alpha}(0)),\ldots,\pi_{n-1}\circ\tau_n(\bar{\alpha}(0)),\alpha_n)$. 
\end{proof}

\begin{corollary}\label{effective_values_are_the_same}
If the action of $G$ on $X$ is principal, then effective topological complexity with or without patching data yield the same value.
\end{corollary}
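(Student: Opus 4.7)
The plan is to read off the corollary directly from the preceding lemma. That lemma produces, under the principality hypothesis, a homeomorphism $F:M^G_n(X) \to M^{G,n}(X)$. The commutative triangle displayed just before the lemma also shows $\tilde{\epsilon}_n \circ F = \epsilon_n$. So the two fibrations whose sectional categories we wish to compare differ only by a homeomorphism of total spaces that sits over the identity on $X^n$. Sectional category is manifestly invariant under such a replacement, so the equality should fall out with no real work.

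Concretely, I would establish the two inequalities separately. For $\mathsf{secat}(\tilde{\epsilon}_n)\leq \mathsf{secat}(\epsilon_n)$, start with an open cover $\{U_1,\ldots,U_k\}$ of $X^n$ together with local sections $s_i:U_i\to M^G_n(X)$ of $\epsilon_n$ realizing the sectional category on the left-hand side. Then $F\circ s_i$ is continuous and satisfies $\tilde{\epsilon}_n\circ (F\circ s_i) = \epsilon_n \circ s_i = \mathrm{id}_{U_i}$, hence furnishes a local section of $\tilde{\epsilon}_n$ over $U_i$. For the reverse inequality, given local sections $t_i$ of $\tilde{\epsilon}_n$, form $F^{-1}\circ t_i$, which is well-defined and continuous precisely because $F$ is a homeomorphism (this is where the preceding lemma does its work). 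Commutativity of the triangle again yields $\epsilon_n\circ (F^{-1}\circ t_i) = \tilde{\epsilon}_n \circ t_i = \mathrm{id}_{U_i}$, so we have a local section of $\epsilon_n$ on the same open cover.

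There is no genuine obstacle here; the only substantive ingredient, continuity of the inverse forgetful map, is supplied by the lemma immediately above, which in turn rests on continuity of the translation function $\tau_n$ established in the first lemma of this pair. Once those are in place, the corollary is a formal consequence and can be written in a few lines.
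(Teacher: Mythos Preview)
Your argument is correct and is precisely the intended one: the paper records no separate proof for this corollary because it is immediate from the preceding lemma together with the commutative triangle $\tilde{\epsilon}_n\circ F=\epsilon_n$. Your explicit unpacking via transporting local sections along $F$ and $F^{-1}$ is exactly how one spells out ``sectional category is unchanged by a homeomorphism of total spaces over the identity of the base''; note also that the inequality $\mathsf{secat}(\tilde{\epsilon}_n)\leq \mathsf{secat}(\epsilon_n)$ already holds without principality (the paper observes this just before the lemma), so the only substantive direction is the one that uses $F^{-1}$.
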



\section{Higher Effectual and Orbital TC}\label{Higher_effl_orb}
 
 The concept of a higher effectual topological complexity of a $G$-space $X$ in the context of the motion planning problem is described as follows: suppose that we are given a tuple $(x_1,[x_2],\ldots,[x_n])$ of target points $x_1\in X$ and $[x_i]\in X/G$ that we want a robot to visit them, 
 and that this task is fulfilled if we can provide a multipath in $X$ whose first component reaches $x_1$ but the remaining components only reach points in $X$ that belong to 
 the $G$-orbits $[x_2],\ldots,[x_n]$. This approach assumes that the motion planning is assisted by the group action, which can be thought of 
 as a mechanical  system facilitating the motion in $X$ as we mentioned in the introduction. This specific motion planning is then related to the
 sectional category of  the map 
 \begin{equation}\label{evaluation_map_for_effectual}
\varepsilon_n : M_n(X) \to X\times (X/G)^{n-1}
 \end{equation}
 defined as the composition of the evaluation map $e_n:M_n(X) \to X^n$ followed by 
 $1\times \pi^{n-1}: X^n \to X\times (X/G)^{n-1}$. Note that if $\pi: X\to X/G$ is a
 fibration, then so is $\varepsilon_n$. Whenever we speak of effectual topological complexity we will consider only free actions of finite 
 groups.
 
\begin{definition}
The $n$-th effectual topological complexity of $X$, denoted by $\mathsf{TC}^G_{\mathsf{effl},n}(X)$, is the sectional category of $\varepsilon_n : M_n(X) \to X\times (X/G)^{n-1}$.
\end{definition}

Related to the effectual motion planning problem, we can also consider the following scenario: 
suppose that we are given a tuple $(x_1,\ldots,x_n)$ of target points in $X$ and that we want to provide a multipath to a robot to visit them 
or at least a member from their $G$-orbit in $X$. This corresponds to studying the sectional category of the map
\begin{equation}\label{evaluation_map_for_orbital}
\mathsf{e}_n: M_n(X) \to (X/G)^n
\end{equation}
defined as the composition $\pi^n\circ e_n: M_n(X) \to X^n \to (X/G)^n$. We will consider only the case when $G$ is a finite group acting freely on $X$.

\begin{definition}
The $n$-th orbital topological complexity of $X$, denoted by $\mathsf{TC}^G_{\mathsf{orb},n}(X)$, is the sectional category of the composite
$\mathsf{e}_n: M_n(X) \to X^n \to (X/G)^n$.
\end{definition}

The following result is a generalization of \cite[Proposition 3.3]{cggz}. 
  
\begin{proposition}\label{inequality}
If $G$ acts freely on $X$, then 
\[ \mathsf{TC}^G_{\mathsf{effv},n}(X) \leq \mathsf{TC}^G_{\mathsf{effl},n}(X) \leq \mathsf{TC}_n(X/G) \leq \mathsf{TC}^G_{\mathsf{orb},n}(X). 
\]
\end{proposition}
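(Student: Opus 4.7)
The plan is to establish the three inequalities separately, starting with the most transparent one on the right. For $\mathsf{TC}_n(X/G) \leq \mathsf{TC}^G_{\mathsf{orb},n}(X)$, I would observe that $\mathsf{e}_n$ factors as $e_n^{X/G}\circ M_n(\pi)$, where $M_n(\pi):M_n(X)\to M_n(X/G)$ is the map induced by the projection $\pi:X\to X/G$. Any local section $s:U\to M_n(X)$ of $\mathsf{e}_n$ over $U\subseteq (X/G)^n$ then yields a local section $M_n(\pi)\circ s$ of $e_n^{X/G}$ over the same $U$, and the inequality follows at once.

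For $\mathsf{TC}^G_{\mathsf{effl},n}(X)\leq \mathsf{TC}_n(X/G)$, I would start from a local section $\sigma:U\to M_n(X/G)$ of $e_n^{X/G}$ over $U\subseteq (X/G)^n$, and pull back along $\pi\times\mathrm{id}^{n-1}$ to the open set $V=(\pi\times\mathrm{id}^{n-1})^{-1}(U)\subseteq X\times (X/G)^{n-1}$. For $(x_1,[x_2],\ldots,[x_n])\in V$, write $(\bar{\alpha}_1,\ldots,\bar{\alpha}_n):=\sigma(\pi(x_1),[x_2],\ldots,[x_n])$ for the resulting multipath in $X/G$. Since $G$ is finite and acts freely on a Hausdorff space, $\pi$ is a covering projection, so unique path lifting applies. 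I would first lift the reverse of $\bar{\alpha}_1$ starting at $x_1$, obtaining a continuously varying basepoint $z\in X$ over $\bar{\alpha}_1(0)$, and then lift each $\bar{\alpha}_i$ starting at $z$. Assembling these lifts produces a continuous section of $\varepsilon_n$ over $V$.

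For the leftmost inequality $\mathsf{TC}^G_{\mathsf{effv},n}(X)\leq \mathsf{TC}^G_{\mathsf{effl},n}(X)$, I would begin with a local section $s:U\to M_n(X)$ of $\varepsilon_n$ and pull back along $\mathrm{id}\times\pi^{n-1}$ to an open set $V\subseteq X^n$. For $(x_1,\ldots,x_n)\in V$, the multipath $(\alpha_1,\ldots,\alpha_n):=s(x_1,[x_2],\ldots,[x_n])$ satisfies $\alpha_i(1)\in x_i\cdot G$, so there are unique group elements $h_i\in G$ with $\alpha_i(1)=x_i h_i$, and by the earlier lemma on the continuity of $\tau_2$ for principal actions the $h_i$ depend continuously on $(x_1,\ldots,x_n)$ (take $h_1=e$). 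Setting $\beta_i(t):=\alpha_i(t)\cdot h_i^{-1}$ and $g_i:=h_i h_{i+1}^{-1}$ yields a tuple $(\beta_1,g_1,\ldots,g_{n-1},\beta_n)\in M^G_n(X)$ satisfying $\beta_i(1)=x_i$ and the patching relation $\beta_i(0)g_i=\beta_{i+1}(0)$, giving a continuous section of $\epsilon_n$ over $V$.

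The main obstacle is the middle step: verifying that both the basepoint $z$ and each of the subsequent path lifts depend continuously on $(x_1,[x_2],\ldots,[x_n])$ as one varies across $V$ requires careful use of the covering structure on $\pi:X\to X/G$. The other two inequalities are essentially formal consequences of the factorizations of the defining maps, together with the principality of the free action of a finite group used in the first step.
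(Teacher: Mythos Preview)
Your proof is correct and rests on the same factorizations as the paper's, but the packaging differs. The paper organizes the first two inequalities via a single commutative diagram
\[
\xymatrix{
M^G_n (X) \ar[rr]^c \ar[d]^{\epsilon_n} & & M_n (X) \ar[rr]\ar[d]^{\varepsilon_n} & & M_n(X/G) \ar[d]^{e_n} \\
X^n \ar[rr]_{1\times \pi^{n-1}} & & X \times (X/G)^{n-1} \ar[rr]_{\pi \times 1^{n-1}} & & (X/G)^{n}
}
\]
and asserts (referring to \cite{cggz}) that both squares are pullbacks, whence the monotonicity of sectional category under pullback gives the inequalities immediately; the last inequality is handled exactly as you do. Your argument instead unwinds what these pullbacks amount to: for the right square you lift paths along the covering $\pi$, and for the left square you recover the patching data $h_i$ via the translation function of the principal action and then twist each leg by $h_i^{-1}$. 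This is precisely the content of ``the squares are pullbacks,'' made explicit. The paper's version is shorter and more conceptual once one accepts the pullback claim; your version has the advantage of being self-contained and of making transparent where principality (and hence continuity of the $h_i$) and the covering property (and hence continuity of the lifted basepoint $z$ and the lifted paths) are actually used.
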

\begin{proof}
To prove the the first two inequalities consider the following commutative diagram.
\[
\xymatrix{
M^G_n (X) \ar[rr]^c \ar[d]^{e_n} & & M_n (X) \ar[rr]\ar[d]^{1\times \pi^{n-1}} & & M_n(X/G) \ar[d]^{e_n} \\
X^n \ar[rr]_{1\times \pi^{n-1}} & & X \times (X/G)^{n-1} \ar[rr]_{\pi \times 1^{n-1}} & & (X/G)^{n} 
}\]
The map $c$ is given by 
$(\alpha_1,g_1,\alpha_2,\ldots,g_{n-1},\alpha_n)\mapsto (\alpha_1,\alpha_2 g_1^{-1},\ldots,\alpha_n g_n^{-1} \cdots g_1^{-1})$. 
The argument of \cite[Proposition 3.3]{cggz} can be used here to show that both squares in this latter diagram are pullbacks. We
leave the details to the reader. The last inequality can be obtained from the following commutative diagram
\[
\xymatrix{
M^n (X) \ar[rr] \ar[dr]_{\mathsf{s}_n} & & M_n (X/G) \ar[ld]^{e_n} \\
 & (X/G)^n & 
}\]
where the top map is induced by the quotient map $\pi:X\to X/G$.
\end{proof}


\section{Properties}\label{properties}

In this section we state basic properties of higher effective, effectual, and orbital TC. 
Several properties for higher effectual TC can be derived from ~\cite{pp} by applying the 
concept of topological complexity of a map to $\pi: X\to X/G$. This latter concept also has been developed into a higher 
version in ~\cite{ik}. All the properties in the result below are standard and can be proved following the definitions and the 
arguments used to prove the analogous properties in the case when $n=2$ or the arguments in the case of higher regular TC. 
We leave the details to the reader.

\begin{theorem}
Suppose that $X$ is a $G$-space. In addition, assume that $G$ acts freely on $X$ for statements related to effectual and orbital TC.
\begin{enumerate}
\item $\mathsf{TC}^G_{\mathsf{effv},n}(X) \leq \mathsf{TC}^G_{\mathsf{effv},n+1}(X)$, and $\mathsf{TC}^G_{\mathsf{orb},n}(X) \leq \mathsf{TC}^G_{\mathsf{orb},n+1}(X)$.
\item If $f: X \to Y$ is a $G$ map with a homotopy right inverse $g:Y\to X$, then $\mathsf{TC}^G_{\mathsf{effv},n}(Y) \leq \mathsf{TC}^G_{\mathsf{effv},n}(X)$, 
and $\mathsf{TC}^G_{\mathsf{effl},n}(Y) \leq \mathsf{TC}^G_{\mathsf{effl},n}(X)$.
Therefore, both $\mathsf{TC}^G_{\mathsf{effv},n}$ and $\mathsf{TC}^G_{\mathsf{effl},n}$ are a $G$-homotopy invariant.
\item If $f: X \to Y$ and $g:Y\to X$ are $G$-maps so that $g$ is a homotopy right inverse of $f$, then $\mathsf{TC}^G_{\mathsf{orb},n}(Y) \leq \mathsf{TC}^G_{\mathsf{orb},n}(X)$. 
Therefore, $\mathsf{TC}^G_{\mathsf{orb},n}$ is a $G$-homotopy invariant.
\item If $H \leq G$, then $\mathsf{TC}^G_{\mathsf{effv},n}(X) \leq \mathsf{TC}^H_{\mathsf{effv},n}(X) \leq \mathsf{TC}_n(X)$.
\item $\mathsf{TC}^G_{\mathsf{effv},n}(X) \leq \mathsf{cat}(X^n) \leq \frac{n \mathrm{hdim}(X)}{\mathrm{conn}(X)+1} + 1$.
\item $\mathsf{TC}_n(X/G) \leq \mathsf{TC}^G_{\mathsf{orb},n}(X)\leq \mathsf{cat}((X/G)^n)$.
\item $\mathsf{cat}((X/G)^{n-1}) \leq \mathsf{TC}^G_{\mathsf{effl},n}(X)$.
\end{enumerate}
\end{theorem}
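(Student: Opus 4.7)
The plan is to verify each item by standard sectional-category arguments, porting the usual $n=2$ proofs to higher $n$ and carrying the patching data where needed. The main obstacle is item (7); the rest amounts to bookkeeping.

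For (1) I would use the fiberwise inclusion $\iota : M^G_n(X) \hookrightarrow M^G_{n+1}(X)$ that appends an identity patching and repeats the last path, sending $(\alpha_1, g_1, \ldots, g_{n-1}, \alpha_n)$ to $(\alpha_1, g_1, \ldots, g_{n-1}, \alpha_n, 1, \alpha_n)$; a local section of $\epsilon_{n+1}$ pulls back along the last-coordinate diagonal in $X^{n+1}$ and then projects onto a local section of $\epsilon_n$, and an identical argument using $M_n(X) \hookrightarrow M_{n+1}(X)$ handles the orbital case. For (2) and (3) I would transport motion planners along the given $G$-maps: given a $G$-homotopy $H : Y \times I \to Y$ from $fg$ to $\mathrm{id}_Y$, convert a section over $U \subseteq X^n$ into one over $(g^n)^{-1}(U) \subseteq Y^n$ by applying $g$ to targets, running the planner in $X$, pushing the resulting paths forward by $f$, and concatenating with $H(y_j, \cdot)$. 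Equivariance of $f$ alone preserves the patching relations (since $f(\beta_i(0)g_i) = f(\beta_i(0))g_i$) and the $(X/G)^{n-1}$ factor, while the orbital case additionally needs $g$ equivariant so that the induced $\bar{g} : Y/G \to X/G$ fits the quotient evaluation square. Item (4) is then immediate from the continuous inclusion $M^H_n(X) \hookrightarrow M^G_n(X)$ (any $H$-patching is a $G$-patching), together with the identification $M^{\{1\}}_n(X) \cong M_n(X)$ which recovers the bound by $\mathsf{TC}_n(X)$.

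For (5), any open $U \subseteq X^n$ that is contractible in $X^n$ admits a section of $\epsilon_n$: a null-homotopy to a basepoint produces a continuous family of multipaths emanating from that basepoint, which sits in $M^G_n(X)$ with trivial patching; combined with the standard LS-category dimension/connectivity estimate on $X^n$, this yields the chain. For (6), the lower bound $\mathsf{TC}_n(X/G) \leq \mathsf{TC}^G_{\mathsf{orb},n}(X)$ follows by post-composing any section of $\mathsf{e}_n$ with the quotient map $M_n(X) \to M_n(X/G)$. For the upper bound $\mathsf{TC}^G_{\mathsf{orb},n}(X) \leq \mathsf{cat}((X/G)^n)$, given $U \subseteq (X/G)^n$ contractible in $(X/G)^n$ I would extend its null-homotopy (via path-connectedness of $X/G$) to terminate on the diagonal at $(p, \ldots, p)$, lift $p$ to $\tilde{p} \in X$, and use unique path lifting for the covering $\pi : X \to X/G$ (finite free action) to lift each coordinate path starting at $\tilde{p}$, yielding a section of $\mathsf{e}_n$ on $U$.

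For (7), the hard part, I would restrict a sectional cover $\{U_i\}$ of $\varepsilon_n$ over $X \times (X/G)^{n-1}$ to the slice $\{x_0\} \times (X/G)^{n-1}$ at a fixed basepoint $x_0 \in X$, and show each $V_i := U_i \cap (\{x_0\} \times (X/G)^{n-1})$ is contractible in $(X/G)^{n-1}$. A section $s_i$ sends $(x_0, [\bar{z}])$ to a multipath $(\alpha_1, \ldots, \alpha_n)$ with common orbit of starts, with $\alpha_1(1) = x_0$ and $[\alpha_{j+1}(1)] = [z_j]$; projecting starts to $X/G$ yields a common point $[a([\bar{z}])]$. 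I would then assemble the required null-homotopy of $V_i$ in two phases: first, in each of the $n-1$ coordinates run $\pi \circ \alpha_{j+1}$ in reverse from $[z_j]$ to $[a([\bar{z}])]$, equalizing all coordinates on the quotient diagonal; second, apply $\pi \circ \alpha_1$ simultaneously in every coordinate to slide $[a([\bar{z}])]$ to $[x_0]$. The delicate step is checking continuity across the splicing time and confirming the assembled map genuinely lands in $(X/G)^{n-1}$, after which categoricity of each $V_i$ yields $\mathsf{cat}((X/G)^{n-1}) \leq \mathsf{TC}^G_{\mathsf{effl},n}(X)$.
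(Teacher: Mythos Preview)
Your proposal is correct and is precisely what the paper intends: the paper gives no proof at all beyond the remark that ``all the properties \ldots\ are standard and can be proved following the definitions and the arguments used to prove the analogous properties in the case when $n=2$ or the arguments in the case of higher regular TC,'' leaving the details to the reader. Your sketches are exactly those details, carried out in the expected way; the only (harmless) slip is in (7), where you say the section produces paths with ``common orbit of starts'' when in fact elements of $M_n(X)$ have a literally common start point, which only strengthens your two-phase null-homotopy.
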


\begin{remark}\normalfont
As we mentioned in the introduction, there are different versions of TC that make use of the action of
a $G$-space: those that either are compatible with the $G$-action, or those that incorporate the group action as part of the motion planning. 
The reader can consult ~\cite{bk} to see a brief summary of the different versions that have been developed. Among these versions 
we have the so-called invariant topological complexity, which is related to effective topological complexity but different
as pointed in ~\cite{bk}. The interested reader could also go to \cite{bs} to see a higher version of invariant topological
complexity, which is also different from the higher effective topological complexity developed in this paper. 
The relationship between these two concepts has been already pointed out in ~\cite{bk} (for the case $n=2$).
\end{remark}

According to \cite[Proposition 2.2]{r}, since the maps $e_n$, $\epsilon_m$, $\varepsilon_n$, $\mathsf{e}_n$ are fibrations (defined in (\ref{evaluation_map_for_regular}), 
(\ref{evaluation_map_for_effective}), (\ref{evaluation_map_for_effectual}) and (\ref{evaluation_map_for_orbital}) respectively), 
the domains of their local planners (i.~e. local sections) can be replaced by ENRs when the corresponding base space of these maps is a polyhedron. Moreover,
when the base space of these maps is an ENR, the openness condition of the domains of the local planners can be replaced by requiring the domains to 
form a partition of ENRs (see for instance \cite[Theorem 4.9]{farber}).  

The following result that comprises Theorems 4 and 5’  in \cite{sv} will be applied to the maps, 
(\ref{evaluation_map_for_effective}), (\ref{evaluation_map_for_effectual}), and (\ref{evaluation_map_for_orbital}) to obtain lower bounds for the
corresponding TC that they afford. A nontrivial element in the kernel of the homomorphism that they induce in cohomology will be called 
an effective, effectual, and orbital zero-divisor respectively.

\begin{proposition}
Let $p:E\to B$ be a fibration, and let $u_1\ldots,u_m$ be classes in $H^*(B;R)$ in the kernel of the homomorphims induced in cohomology 
by $p$. If $u_1\cdots u_m \neq 0$, then $\mathsf{secat}(p) \geq m+1$.  
\end{proposition}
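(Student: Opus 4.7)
The plan is to follow the classical cohomological lower-bound argument for sectional category, going back to Schwarz, adapted to the formulation given in the proposition. I would argue by contrapositive: assuming $\mathsf{secat}(p)\leq m$, I will exhibit an open cover that forces the product $u_1\cdots u_m$ to vanish.

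First, suppose $\mathsf{secat}(p)\leq m$, so that there is an open cover $U_1,\ldots,U_m$ of $B$ together with local sections $s_i\colon U_i\to E$ of $p$, i.e., $p\circ s_i$ is the inclusion $j_i\colon U_i\hookrightarrow B$. Given a class $u_i\in\ker p^*$, the pullback under the inclusion satisfies
\[
j_i^* u_i \;=\; (p\circ s_i)^* u_i \;=\; s_i^*\bigl(p^* u_i\bigr) \;=\; 0,
\]
so the restriction of $u_i$ to $U_i$ is zero. The long exact sequence of the pair $(B,U_i)$ then provides a relative class $\tilde u_i\in H^*(B,U_i;R)$ whose image under the natural map $H^*(B,U_i;R)\to H^*(B;R)$ is $u_i$.

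Next I would invoke the relative cup product. For open subsets $U_1,\ldots,U_m$ of $B$, the iterated cup product yields a map
\[
H^*(B,U_1;R)\otimes\cdots\otimes H^*(B,U_m;R)\;\longrightarrow\;H^*\!\bigl(B,\,U_1\cup\cdots\cup U_m;R\bigr),
\]
compatible with the forgetful maps to $H^*(B;R)$. Applying this to $\tilde u_1,\ldots,\tilde u_m$, the product $\tilde u_1\smile\cdots\smile\tilde u_m$ lives in $H^*(B,B;R)=0$, since the $U_i$ cover $B$. On the other hand, its image in absolute cohomology is precisely $u_1\cdots u_m$, which is nonzero by hypothesis. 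This contradiction forces $\mathsf{secat}(p)\geq m+1$.

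The only subtle step, and the one I would verify carefully, is the existence and naturality of the relative cup product on open covers; everything else is a direct unwinding of definitions. For singular cohomology with coefficients in an arbitrary ring this is standard (see, e.g., Hatcher, or Schwarz's original paper), and it is the engine that makes the zero-divisor obstruction work. Once that is in hand, the argument is essentially formal, and the result applies verbatim to each of the fibrations $\epsilon_n$, $\varepsilon_n$, $\mathsf{e}_n$ used later in the paper to produce the effective, effectual and orbital zero-divisor lower bounds.
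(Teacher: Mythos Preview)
Your argument is correct and is precisely the classical Schwarz argument. The paper does not supply its own proof of this proposition; it simply records the statement and attributes it to Theorems~4 and~5$'$ of Schwarz~\cite{sv}, so your write-up is in fact more detailed than what appears in the paper and follows exactly the route Schwarz takes.
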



\section{TC and Symmetries of Spheres}\label{spheres}

In this section we will consider the antipodal and reflection action on the sphere $S^m$, with $m\geq 1$.
The calculation of $\mathsf{TC}^{\mathbb{Z}_2}_{\mathsf{effv},2}(S^m)$ was obtained in \cite{bk}, and the value
of $\mathsf{TC}^{\mathbb{Z}_2}_{\mathsf{effl},2}(S^m)$ was considered in \cite{cggz}. Here we extend these results.
Given $v\in S^m$, we let $c_v$ be the constant path at $v$, and if $u\neq -v$ we denote by $[u,v]$ the shortest path in $S^m$ 
from $u$ to $v$.

\begin{proposition}
If $\mathbb{Z}_2=\langle \rho \rangle$ acts on $S^m$ by $\rho(x_1,\cdots,x_{m+1})=(x_1,\cdots,x_m,-x_{m+1})$, then
 \[
 \mathsf{TC}^{\mathbb{Z}_2}_{\mathsf{effv},n}(S^m) = n.
 \]
\end{proposition}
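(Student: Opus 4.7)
The plan is to prove $\mathsf{TC}^{\mathbb{Z}_2}_{\mathsf{effv},n}(S^m) = n$ by matching lower and upper bounds. The lower bound $\geq n$ will come from the Švarc proposition applied with mod-$2$ cohomology, exploiting the fact that although $\rho$ reverses orientation on $S^m$, it acts trivially on $H^*(S^m;\mathbb{Z}/2)$. The upper bound $\leq n$ splits by the parity of $m$: the odd case is immediate from the comparison with the classical $\mathsf{TC}_n(S^m) = n$, while the even case requires a motion planner that genuinely uses the reflection action to fit inside $n$ ENRs (since $\mathsf{TC}_n(S^m) = n+1$ in that case).

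For the lower bound, let $y$ generate $H^m(S^m;\mathbb{Z}/2)$ and let $y_i \in H^m((S^m)^n;\mathbb{Z}/2)$ denote its pullback along the $i$-th projection. Every component of the saturated diagonal $\Delta_G^n\colon S^m \times \mathbb{Z}_2^{n-1}\to (S^m)^n$ has the form $x\mapsto (x,\rho^{h_1}x,\ldots,\rho^{h_{n-1}}x)$, and since $\rho^*y = y$ modulo $2$, each $y_i$ pulls back to $y$ on every component. Hence $z_i := y_i + y_1\in \ker\epsilon_n^*$ for $i = 2,\ldots,n$. Expanding $\prod_{i=2}^n(y_i+y_1)$ modulo $y_j^2 = 0$ gives $y_2\cdots y_n + y_1\sum_{j=2}^n y_2\cdots \widehat{y_j}\cdots y_n$, whose summands are distinct nonzero basis monomials of $H^{m(n-1)}((S^m)^n;\mathbb{Z}/2)$, so the product is nonzero and the Švarc proposition yields $\mathsf{TC}^{\mathbb{Z}_2}_{\mathsf{effv},n}(S^m)\geq n$.

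For the upper bound, the $m$ odd case follows at once from property (4) of the properties theorem together with $\mathsf{TC}_n(S^m) = n$. For $m$ even I would partition $(S^m)^n$ into $n$ ENRs indexed by the location of the last coordinate antipodal to $y_1$: let $B_1 = \{y: y_j\neq -y_1\ \forall j\geq 2\}$ and, for $i=2,\ldots,n$, let $B_i = \{y: y_i = -y_1,\ y_j\neq -y_1\ \forall j>i\}$. On $B_1$ take $x = y_1$ with all $h_j = 0$, so every $\alpha_j$ is the shortest arc $y_1 \to y_j$. On $B_i$ with $i\geq 2$, take $x = z(y_1) := Ry_1$, where $R\in SO(m+1)$ is the composition of $90^\circ$-rotations in the orthogonal $2$-planes $(e_{2k-1},e_{2k})$, $k = 1,\ldots,m/2$, acting trivially on $e_{m+1}$; direct eigenvalue checks show that $R$ has no $-1$ eigenvalue and $\rho R$ has no $+1$ eigenvalue, so $z(y)\notin\{-y,\rho y\}$ for every $y$. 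Assign $h_j = 1$ for every $j$ with $y_j = -y_1$ (the shortest arc $\rho z(y_1)\to -y_1$ exists by the $\rho R$ eigenvalue property, even when $y_1$ lies on the equator) and otherwise assign $h_j\in\{0,1\}$ so that the required shortest arc from $\rho^{h_j}z(y_1)$ to $y_j$ exists.

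The main obstacle is the residual configuration in which $y_1$ lies on the equator $\rho y_1 = y_1$ and some $y_j$ (with $y_j\neq -y_1$) equals $-Ry_1 = -\rho Ry_1$: there neither $h_j = 0$ nor $h_j = 1$ produces a shortest arc from $\rho^{h_j}z(y_1)$ to $y_j$. To handle this I plan to route the offending $\alpha_j$ through a fixed auxiliary point, or to replace $z$ by a second twisting rotation $z'$ on the small stratum where $z$ fails, and then to verify that the resulting $h_j$ assignments can be arranged as locally constant data on the stratification of each $B_i$ so that the $n$-piece partition is preserved and every piece still carries a continuous section. This continuity and compatibility bookkeeping, generalising the $n = 2$ argument from \cite{bk}, is the technical heart of the proof.
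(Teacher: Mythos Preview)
Your lower bound is correct and essentially identical to the paper's: both use that $\rho$ acts trivially on $H^*(S^m;\mathbb{Z}/2)$, so the usual $n-1$ zero divisors of the diagonal are effective zero divisors; the odd-$m$ upper bound via $\mathsf{TC}_n(S^m)=n$ is likewise the same.

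The gap is in your upper bound for $m$ even. You leave the ``residual configuration'' unresolved and only sketch possible patches, without checking that they fit inside $n$ pieces; and even before that, your rule ``assign $h_j\in\{0,1\}$ so that the shortest arc exists'' is not locally constant on $B_i$ (it must flip near the loci $y_j=-Ry_1$ and $y_j=-\rho Ry_1$), so the section you describe is already discontinuous within a single $B_i$. The paper avoids all of this with a simpler idea you are missing: use the reflection $\rho$ itself, not an auxiliary rotation $R$, as the twist. For any target $w\neq -\rho(v_1)$ take the shortest arc $[\rho(v_1),w]$ with patching element $\rho$; the unique bad target $w=-\rho(v_1)=(-v_1^{(1)},\ldots,-v_1^{(m)},v_1^{(m+1)})$ lies in the same horizontal slice $\{x_{m+1}=\text{const}\}$ as $v_1$, and since $m-1$ is odd that slice $S^{m-1}$ carries a nowhere-zero tangent field $\nu$, which selects a canonical half-circle $\beta(v_1)$ from $v_1$ to $-\rho(v_1)$ inside the slice (degenerating continuously to the constant path at the poles, where $-\rho(v_1)=v_1$). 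Now partition $(S^m)^n$ by the \emph{number} of indices $i\geq 2$ with $v_i=-\rho(v_1)$: this gives exactly $n$ ENRs, on each of which the bad-index set is locally constant, and the planner $s(v_1,\ldots,v_n)=(c_{v_1},\alpha(v_1,v_2),\ldots,\alpha(v_1,v_n))$ with $\alpha(v,w)=(\rho,[\rho v,w])$ or $(1,\beta(v))$ is continuous with no residual cases.
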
 
\begin{proof}
Let $w$ be the generator of degree $2$ of the mod-2 cohomology of $S^m$, and note that the action of $\mathbb{Z}_2$ on the mod-2 
cohomology of $S^m$ is trivial. When $n=2$, the class $w\otimes w$ is an effective zero divisor. When $n>2$ consider the classes 
$W_j = w\otimes 1 \otimes \cdots \otimes 1 + 1\otimes \cdots \otimes w\otimes 1\otimes \cdots\otimes 1$, where $w$ is in the $j$-th 
factor of the second summand. The classes $U_2,\cdots, U_n$ are nontrivial effective zero divisors and have nontrivial product. 
Hence $n\leq \mathsf{TC}^{\mathbb{Z}_2}_{\mathsf{effv},n}(S^m)$, and since we know that $\mathsf{TC}^{\mathbb{Z}_2}_{\mathsf{effv},n}(S^m) \leq \mathsf{TC}_n(S^m)$ it suffices 
to provide an effective motion planner in the case when $m$ is even. 

Let us assume that $m$ is even and pick a tangent vector field of $S^{m-1}$ that we will propagate on $S^m$ in such a way that defines a 
tangent field $\nu$ which is orthogonal to the $x_{m+1}$-axis and vanishes at the north and south poles $(0,\ldots,0,\pm 1)$. 

Let $V_j$ be the subspace of $(S^m)^n$ consisting of tuples $(v_1,\dots,v_n)$ such that there are exactly $j$ components in
$(v_2,\ldots,v_n)$ that are equal to  $-\rho(v_1)$. Thus $V_0,\ldots,V_{n-1}$ form a partition of $(S^m)^n$. Given $v\in S^m$
we define $\beta(v)$ to be the path from $v$ to $-\rho(v)$ determined by $\nu(v)$ and contained in the hyperplane $x_{m+1}=p_{m+1}(v)$, where 
$p_{m+1}$ is the projection onto the $m+1$ factor.
Given $v,w \in S^m$ we let $\alpha(v,w) \in G \times P(S^m)$ be given by
\[
 \alpha(v,w) = \left\{
\begin{array}{lr}
(\rho,[\rho(v),w]) & \text{if } w \neq -\rho(v) \\
(1,\beta(v)) & \text{if } w = -\rho(v)
\end{array}
 \right.  .
 \]
Thus we define $s(v_1,\ldots,v_n) = (c_{v_1},\alpha(v_1,v_2),\ldots,\alpha(v_1,v_n))$. The restriction of $s$ to each of the subspaces $V_j$ 
defines a motion planner that yields multipaths emanating from the orbit of $v_1$.
This motion planner provides the desired upper bound $ \mathsf{TC}^{\mathbb{Z}_2}_{\mathsf{effv},n}(S^m) \leq n$ when $n$ is even.
\end{proof}

 \begin{proposition}
 If $\mathbb{Z}_2=\langle \sigma \rangle$ acts on $S^m$ by $\sigma(x) = -x$, then
 \[
 \mathsf{TC}^{\mathbb{Z}_2}_{\mathsf{effv},n}(S^m) =n.
 \]
 \end{proposition}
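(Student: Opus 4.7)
The plan is to mirror the proof of the preceding reflection proposition, exploiting the fact that $-\sigma(v)=v$ for every $v\in S^m$, which will render the tangent vector field of that argument unnecessary. For the lower bound, let $w\in H^m(S^m;\mathbb{F}_2)$ denote the top class and recall that $\sigma$ acts trivially on $H^*(S^m;\mathbb{F}_2)$. I would show that the classes
\[
W_j \;=\; w\otimes 1\otimes\cdots\otimes 1 \;+\; 1\otimes\cdots\otimes w\otimes\cdots\otimes 1 \qquad (j=2,\ldots,n),
\]
with $w$ placed in the first and in the $j$-th tensor slots, are effective zero-divisors: via the identification $\mathsf{secat}(\epsilon_n)=\mathsf{secat}(\Delta^{\mathbb{Z}_2}_n)$ from Section~\ref{higher_effv}, the saturated diagonal pulls each $w_j$ back to the same class $w$, so that $W_j$ pulls back to $w+w=0$. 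Expanding $W_2\cdots W_n$ in $\mathbb{F}_2[w_1,\ldots,w_n]/(w_i^2)$ yields in particular the nonzero monomial $w_2w_3\cdots w_n$, so the product is nonzero, and the cup-length lower bound for sectional category stated in Section~\ref{properties} delivers $\mathsf{TC}^{\mathbb{Z}_2}_{\mathsf{effv},n}(S^m)\geq n$.

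For the upper bound, when $m$ is odd I would invoke $\mathsf{TC}^{\mathbb{Z}_2}_{\mathsf{effv},n}(S^m)\leq \mathsf{TC}_n(S^m)=n$ from the properties theorem, so it remains to exhibit $n$ continuous local planners when $m$ is even. Here is the key observation distinguishing this case from the reflection case: with the default strategy ``shift each coordinate to the orbit partner $-v_1$ and connect by the shortest arc $[-v_1,v_i]$,'' the only obstruction is $v_i=-(-v_1)=v_1$; but on this bad set the orbit already contains the target $v_i=v_1$, so the constant path $c_{v_1}$ together with the trivial patching element furnishes the repair. No tangent vector field on $S^m$ is needed.

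Concretely, I would partition $(S^m)^n$ as in the reflection proof into $V_0,\ldots,V_{n-1}$, where $V_j$ consists of tuples with exactly $j$ of $v_2,\ldots,v_n$ equal to $v_1$. Each $V_j$ is an ENR that further decomposes into clopen sub-pieces indexed by the location $B\subseteq\{2,\ldots,n\}$ of the bad coordinates. On the sub-piece with bad set $B$, I set
\[
s(v_1,\ldots,v_n) \;=\; \bigl(c_{v_1},\, \alpha(v_1,v_2),\,\ldots,\,\alpha(v_1,v_n)\bigr),
\]
where $\alpha(v_1,v_i)=(1,c_{v_1})$ if $i\in B$ and $\alpha(v_1,v_i)=(\sigma,[-v_1,v_i])$ otherwise. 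Continuity on each sub-piece is automatic because $v_i\neq v_1$ off $B$ ensures that $[-v_1,v_i]$ is defined, and since the sub-pieces are clopen in $V_j$ they assemble into a continuous local section on the ENR $V_j$. This produces the desired $n$ ENR-sections and completes the bound $\mathsf{TC}^{\mathbb{Z}_2}_{\mathsf{effv},n}(S^m)\leq n$. The only non-routine step is recognizing that the constant-path repair obviates the vector-field construction of the reflection proof; after that, the clopen-piecing argument carries over verbatim.
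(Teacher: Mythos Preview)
Your proof is correct and follows essentially the same route as the paper. The paper's planner is exactly your $\alpha$, written without patching data (invoking principality of the antipodal action, Corollary~\ref{effective_values_are_the_same}), and your $V_j$ is the paper's $U_{n-1-j}$; the paper also does not split into parity cases, since the planner you wrote works uniformly in $m$, making the appeal to $\mathsf{TC}_n(S^m)=n$ for odd $m$ redundant.
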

 \begin{proof}
 Let $U_j$ be the subspace of $(S^m)^n$ consisting of tuples $(x_1,\ldots,x_n)$ with exactly $j$ components that are different from $x_1$. Note that each $U_j$ is an ENR and that $(S^m)^n = U_0 \sqcup \ldots \sqcup U_{n-1}$. For simplicity of notation, since the action is principal, we will describe effective motion planners without the patching data. 
 
 Let $\alpha(x,y)$ be the path given by
 \[
 \alpha(x,y) = \left\{
\begin{array}{lr}
[-x,y] & \text{if } y \neq x \\
c_x & \text{if } y = x
\end{array}
 \right.  .
 \]
Now we let $s:(S^m)^n \to M^{\mathbb{Z}_2,n}(S^m)$ given by
\[
s(x_1,\ldots,x_n) = (\alpha(x_1,x_1), \alpha(x_1,x_2),\ldots,\alpha(x_1,x_n)).
\]
Thus, the restriction of $s$ to each $U_j$ determines an effective planner. 
Hence $\mathsf{TC}^{\mathbb{Z}_2}_{\text{effv},n}(S^m) \leq n$. To complete the proof, we will consider zero divisors for the map 
$\epsilon_n$ (these are called effective zero divisors in~\cite{cg}). Recall that $\epsilon_n$ is the fibrational substitute of the 
saturated diagonal map $\Delta^{\mathbb{Z}_2}_n: S^m\times \mathbb{Z}_2^{n-1} \to (S^m)^n$. Let $\sigma_i$ be the $(n-1)$-tuple in
$\mathbb{Z}_2^{n-1}$ with $i^{th}$ entry equal to $\sigma$ and all the remaining entries equal to 1, and let 
$j_{i_1,\ldots,i_r}$ be the natural inclusion map $S^m\cong S^m\times \{ \sigma_{i_1}\cdots\sigma_{i_r}\} \to S^m \times \mathbb{Z}_2^{n-1}$.
Note that in mod-2 cohomology, each composite $\Delta^{\mathbb{Z}_2}_n \circ j_{i_1,\ldots,i_r}$ induces the same homomorphism as 
the diagonal map $\Delta_n: S^m \to (S^m)^n$. The result follows since an element in the kernel of the diagonal will be in the of all the 
inclusions $j_{i_1,\ldots,i_r}$, and it's well-known that in mod-2 cohomology the zero divisors cup length of $\Delta_n$ 
equals $n-1$. Therefore $\mathsf{TC}^{\mathbb{Z}_2}_{\mathsf{effv},n}(S^m) \geq n$ as wanted.
\end{proof}

\begin{proposition}
If $\mathbb{Z}_2$ acts antipodally on $S^m$, then
\[
\mathsf{TC}^{\mathbb{Z}_2}_{\mathsf{effl},n}(S^m) \in \{ (n-1)m + 1,(n-1)m+2\}.
\]
\end{proposition}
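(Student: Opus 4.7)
The plan is to sandwich $\mathsf{TC}^{\mathbb{Z}_2}_{\mathsf{effl},n}(S^m)$ between $(n-1)m+1$ and $(n-1)m+2$ using two general bounds, both of which reduce to a cup-length computation in the mod-$2$ cohomology of $(\mathbb{R}P^m)^{n-1}$.

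For the lower bound I would invoke property~(7) of the Theorem in Section~\ref{properties}, which gives $\mathsf{cat}((\mathbb{R}P^m)^{n-1}) \leq \mathsf{TC}^{\mathbb{Z}_2}_{\mathsf{effl},n}(S^m)$, and then compute $\mathsf{cat}((\mathbb{R}P^m)^{n-1}) = (n-1)m+1$. Iterated subadditivity of $\mathsf{cat}$ starting from $\mathsf{cat}(\mathbb{R}P^m)=m+1$ yields the upper bound, and the nonzero top class $\prod_{i=1}^{n-1}(w_1^{(i)})^m \in H^{*}((\mathbb{R}P^m)^{n-1};\mathbb{F}_2)$ supplies the matching cup-length lower bound. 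An equivalent route is to apply the zero-divisor Proposition directly to $\varepsilon_n$: since $\varepsilon_n$ is the fibrational substitute of the twisted diagonal $\tilde{\Delta}_n\colon S^m\to S^m \times (\mathbb{R}P^m)^{n-1}$, $x\mapsto(x,[x],\ldots,[x])$, and since each class $w_1^{(i)}$ pulls back under $\tilde{\Delta}_n^{*}$ to $\pi^{*}(w_1)=0\in H^{*}(S^m;\mathbb{F}_2)$, taking $m$ copies of each $w_1^{(i)}$ for $i=1,\ldots,n-1$ produces $(n-1)m$ effectual zero-divisors whose product is nonzero.

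For the upper bound I would use the standard inequality $\mathsf{secat}(p)\leq \mathsf{cat}(B)$, valid for any fibration $p\colon E\to B$ because any subset of $B$ with null-homotopic inclusion admits a lift through $p$ by the homotopy lifting property. Combined with subadditivity of $\mathsf{cat}$ under products this yields
\[
\mathsf{TC}^{\mathbb{Z}_2}_{\mathsf{effl},n}(S^m) \leq \mathsf{cat}\bigl(S^m \times (\mathbb{R}P^m)^{n-1}\bigr) \leq \mathsf{cat}(S^m) + \mathsf{cat}\bigl((\mathbb{R}P^m)^{n-1}\bigr) - 1 = (n-1)m + 2,
\]
closing the bracket.

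The main obstacle is not the bracketing statement itself, since both bounds are essentially formal once $\mathsf{cat}((\mathbb{R}P^m)^{n-1})$ is in hand; rather, it is pinning down which of the two values the invariant actually attains. This is precisely the unresolved $\delta$ of the introduction, and deciding it would require either an explicit effectual motion planner using only $(n-1)m+1$ domains, or a sharper cohomological obstruction that exploits the $H^m(S^m)$-generator together with the classes $w_1^{(i)}$.
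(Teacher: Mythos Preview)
Your proposal is correct and follows essentially the same approach as the paper: the paper's proof is the one-line sandwich $\mathsf{cat}((\mathbb{R}P^m)^{n-1}) \leq \mathsf{TC}^{\mathbb{Z}_2}_{\mathsf{effl},n}(S^m) \leq \mathsf{cat}(S^m\times(\mathbb{R}P^m)^{n-1})$, and you have simply spelled out the underlying cup-length and subadditivity computations (and added an alternative zero-divisor argument for the lower bound).
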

\begin{proof}
We have $ \mathsf{cat}( (\mathbb{R}P^m)^{n-1}) \leq \mathsf{TC}^{\mathbb{Z}_2}_{\mathsf{effl},n}(S^m) \leq \mathsf{cat}(S^m\times (\mathbb{R}P^m)^{n-1})$. 
The result follows.
\end{proof}

\begin{proposition}
If $\mathbb{Z}_2$ acts antipodally on $S^m$, then
\[
\mathsf{TC}^{\mathbb{Z}_2}_{\mathsf{orb},n}(S^m) = nm+1.
\]
\end{proposition}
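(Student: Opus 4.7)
The plan is to sandwich $\mathsf{TC}^{\mathbb{Z}_2}_{\mathsf{orb},n}(S^m)$ between $nm+1$ (from below, via cohomological zero-divisors) and $nm+1$ (from above, via Lusternik--Schnirelmann category of $(\mathbb{R}P^m)^n$). Write $\pi:S^m\to\mathbb{R}P^m$ for the orbit map and let $a\in H^1(\mathbb{R}P^m;\mathbb{Z}_2)$ be the generator, so $H^*((\mathbb{R}P^m)^n;\mathbb{Z}_2)=\mathbb{Z}_2[a_1,\ldots,a_n]/(a_i^{m+1})$, where $a_i$ is $a$ in the $i$th tensor factor.

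\textbf{Upper bound.} Since $e_n$ and $\pi^n$ are both fibrations, so is their composite $\mathsf{e}_n:M_n(S^m)\to(\mathbb{R}P^m)^n$, and the sectional category of a fibration is bounded by the LS category of the base; this gives
\[
\mathsf{TC}^{\mathbb{Z}_2}_{\mathsf{orb},n}(S^m)\;\leq\;\mathsf{cat}\bigl((\mathbb{R}P^m)^n\bigr).
\]
Using $\mathsf{cat}(\mathbb{R}P^m)=m+1$ and the (unreduced) product inequality $\mathsf{cat}(X\times Y)\leq \mathsf{cat}(X)+\mathsf{cat}(Y)-1$ iteratively, one obtains $\mathsf{cat}((\mathbb{R}P^m)^n)\leq n(m+1)-(n-1)=nm+1$. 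This is already recorded as item (6) of the Properties theorem.

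\textbf{Lower bound.} I would apply the zero-divisor proposition above to $\mathsf{e}_n$. The space $M_n(S^m)$ deformation retracts onto the subspace of constant multipaths, which is $S^m$ itself, and under this retraction $\mathsf{e}_n$ becomes $\Delta_n\circ\pi:S^m\to\mathbb{R}P^m\to(\mathbb{R}P^m)^n$. Hence
\[
\mathsf{e}_n^*(a_i)\;=\;\pi^*\Delta_n^*(a_i)\;=\;\pi^*(a)\in H^1(S^m;\mathbb{Z}_2),
\]
and this class vanishes: for $m\geq 2$ simply because $H^1(S^m;\mathbb{Z}_2)=0$, and for $m=1$ because $\pi$ has degree $2$, which is zero mod $2$. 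Thus each $a_i$ is an orbital zero-divisor. Now $a_1^m\,a_2^m\cdots a_n^m\neq 0$ in $H^{nm}((\mathbb{R}P^m)^n;\mathbb{Z}_2)$, and this is a product of $nm$ orbital zero-divisors (taking each $a_i$ exactly $m$ times). The zero-divisor proposition therefore yields $\mathsf{secat}(\mathsf{e}_n)\geq nm+1$.

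\textbf{Obstacles.} There is essentially no hard step: the argument is a direct application of tools assembled earlier in the paper. The only points that need a careful word are (i) the deformation of $M_n(S^m)$ onto constant multipaths, so that $\mathsf{e}_n$ is genuinely identified with $\Delta_n\circ\pi$ for cohomological purposes, and (ii) the $m=1$ case of $\pi^*(a)=0$, which should be noted explicitly so that the argument is uniform in $m\geq 1$. Combining the two bounds yields the claimed equality $\mathsf{TC}^{\mathbb{Z}_2}_{\mathsf{orb},n}(S^m)=nm+1$.
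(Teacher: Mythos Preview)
Your proof is correct and follows essentially the same approach as the paper: the upper bound comes from $\mathsf{cat}((\mathbb{R}P^m)^n)$ (item (6) of the Properties theorem), and the lower bound comes from the $nm$ orbital zero-divisors $a_i$ whose product $a_1^m\cdots a_n^m$ is nonzero. The only cosmetic difference is that the paper observes $(\pi^n)^*(a_i)=0$ in $H^*((S^m)^n;\mathbb{Z}_2)$ and then notes $\ker(\pi^n)^*\subseteq\ker\mathsf{e}_n^*$, whereas you pass through the homotopy equivalence $M_n(S^m)\simeq S^m$ to identify $\mathsf{e}_n$ with $\Delta_n\circ\pi$; your explicit treatment of the case $m=1$ is in fact more careful than the paper's.
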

\begin{proof}
Note that the kernel of $(\pi^n)^*:H^*( (X/G)^n ) \to H^*(X^n)$ is contained in the kernel of $(\pi^n\circ e_n)^*: H^*( (X/G)^n) \to H^*(X)$.
So, if $\alpha$ is the generator of $H^1(\mathbb{R}P^m;\mathbb{Z}_2)$ and $p_i: (\mathbb{R}P^m)^n \to \mathbb{R}P^m$ is the projection onto
the $i$-th factor, the the classes $p_i(\alpha)$ are in the kernel of $(\pi^n)^*$ and $(p_1(\alpha))^m \cdots p_n(\alpha)^m \neq 0$. Moreover, since
$\mathsf{TC}^G_{\mathsf{orb},n}(S^m)\leq \mathsf{cat}((\mathbb{R}P^m)^n)$, it follows that $\mathsf{TC}^{\mathbb{Z}_2}_{\mathsf{orb},n}(S^m) \leq nm+1$.
This completes the proof.
\end{proof}

Note that this last result shows that the topological complexity of the quotient space $X/G$ is not equal to the orbital topological complexity of $X$.
For instance, it is know that $TC_2(\mathbb{R}P^3)=\mathsf{cat}(\mathbb{R}P^3)=4$, while $\mathsf{TC}^{\mathbb{Z}_2}_{orb,2}(S^3) = 7$. 


\section{TC and Symmetries of Surfaces}\label{surfaces}

In this section we will consider three different actions of $\mathbb{Z}_2$ on an orientable surface $\Sigma_g$ of genus 
$g\geq 1$ embedded in $\mathbb{R}^3$ according to Figure \ref{fig:Prezel}. 
The three actions are:
\begin{enumerate}
    \item The reflection action: this is given by reflection about the $xy$-plane. This action is not free, and has a 
    quotient space homotopy equivalent to a bouquet of $g$ circles.
    \item The rotation action: this is given by rotating $\Sigma_g$ about the $z$-axis by 180 degrees. We will consider this action only when $g$ is odd, which is the case when this action is free and $\Sigma_{2l+1}$ has as quotient $\Sigma_{l+1}$.
    \item The antipodal action: this is given by $(x,y,z)\mapsto (-x,-y,-z)$. This is a free action with
    quotient space a non-orientable surface $N_{g+1}$ of genus $g+1$. 
\end{enumerate}

When regarding $\Sigma_g$ with one of these actions we will write $\Sigma_g^{\mathsf{ant}}$, $\Sigma_g^{\mathsf{rot}}$, and $\Sigma_g^\mathsf{ref}$ accordingly.
Recall that the (integral) cohomology of $\Sigma_n$ has generators in degree one $a_1,\ldots,a_n$ and $b_1,\ldots,b_n$, and one generator $c$ in degree two such 
that $a_i^2=b_i^2=0$ and $a_i b_i=\delta_{ij} c$. 


\begin{figure}
    \centering
    \includegraphics[height=5.8cm]{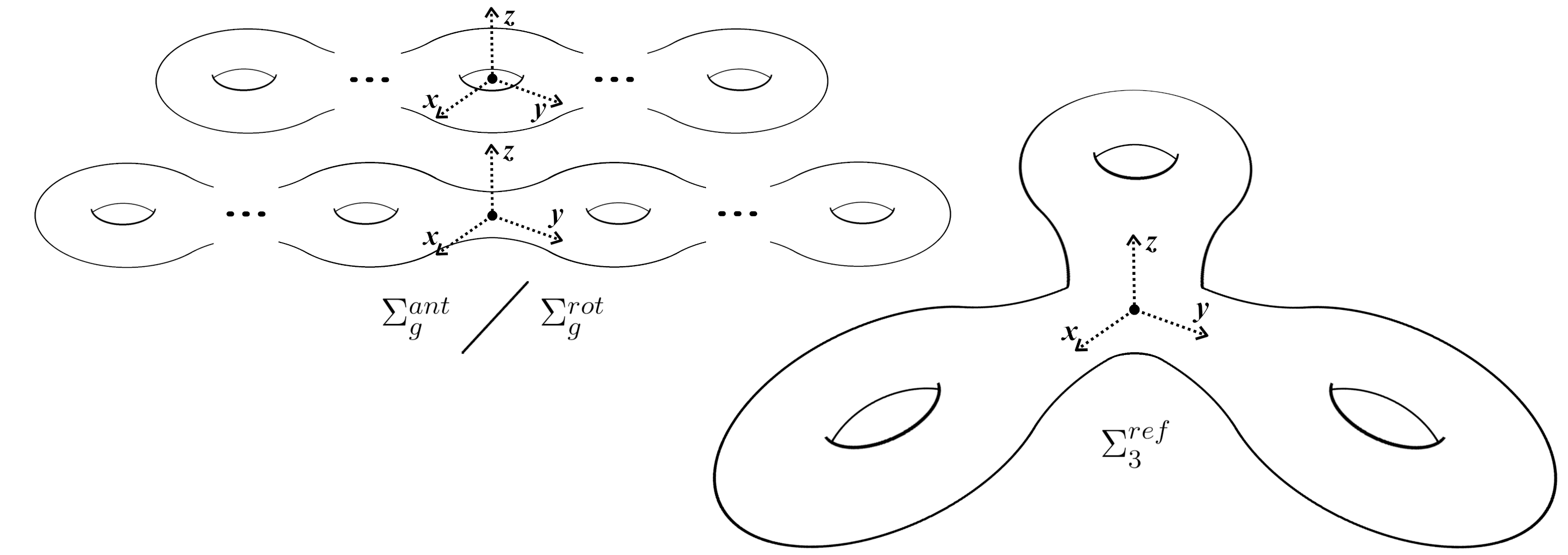}
    \caption{Embedding $\Sigma_g^{\mathsf{ant}}$, $\Sigma_g^{\mathsf{rot}}$, and $\Sigma_3^\mathsf{ref}$ in $\mathbb{R}^3$.}
    \label{fig:Prezel}
\end{figure}


\subsection{The reflection action}

Since the reflection action on $\Sigma_g$ is not free, we will consider only the effective topological complexity of $\Sigma_g^\mathsf{ref}$.

\begin{proposition}
If $g \geq 2$, then $\mathsf{TC}^{\mathbb{Z}_2}_{\mathsf{effv},n}(\Sigma_g^\mathsf{ref}) \in \{ 2n,2n+1\}$; and $\mathsf{TC}^{\mathbb{Z}_2}_{\mathsf{effv},n}(\Sigma_1^\mathsf{ref})=\mathsf{TC}_n(\Sigma_1)=2n-1$.
\end{proposition}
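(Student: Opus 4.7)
The approach is to combine the general upper bounds $\mathsf{TC}^{\mathbb{Z}_2}_{\mathsf{effv},n}(X)\leq \mathsf{TC}_n(X)$ and $\mathsf{TC}^{\mathbb{Z}_2}_{\mathsf{effv},n}(X)\leq \mathsf{cat}(X^n)$ from the preceding Properties theorem with a lower bound obtained from a mod-$2$ cup-length argument applied to effective zero divisors. The key technical observation is that the reflection $\rho$ acts trivially on $H^*(\Sigma_g;\mathbb{Z}_2)$.

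To check this, I would choose a symplectic basis $a_1,b_1,\ldots,a_g,b_g$ of $H^1(\Sigma_g;\mathbb{Z})$ adapted to Figure~\ref{fig:Prezel}: the $a_i$ are represented by loops lying in the $xy$-plane (hence pointwise fixed by $\rho$), while the $b_i$ are represented by loops crossing the $xy$-plane transversally (hence sent to themselves with reversed orientation). Then $\rho^*(a_i)=a_i$ and $\rho^*(b_i)=-b_i$ in integer cohomology, which becomes the identity modulo $2$. Since $\Sigma_g\times\mathbb{Z}_2^{n-1}$ is a disjoint union of $2^{n-1}$ copies of $\Sigma_g$, and $\Delta^{\mathbb{Z}_2}_n$ restricted to the component indexed by $(g_1,\ldots,g_{n-1})$ pulls back $u_1\otimes\cdots\otimes u_n$ to $u_1\cdot g_1^*(u_2)\cdots(g_1\cdots g_{n-1})^*(u_n)$, under trivial action this collapses to $\Delta_n^*(u_1\otimes\cdots\otimes u_n)$. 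Therefore $\ker((\Delta^{\mathbb{Z}_2}_n)^*)=\ker(\Delta_n^*)$ in mod-$2$ cohomology, so every mod-$2$ ordinary zero divisor is an effective zero divisor.

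For $\Sigma_1=T^2$, the $2(n-1)$ standard zero divisors $a_1+a_j$ and $b_1+b_j$ (for $j=2,\ldots,n$) have nonzero mod-$2$ cup product, which is the classical calculation underlying $\mathsf{TC}_n(T^2)=2n-1$; by the previous step these are all effective zero divisors, giving the lower bound $\geq 2n-1$, which together with the upper bound $\leq\mathsf{TC}_n(T^2)=2n-1$ yields equality. For $g\geq 2$, the upper bound $\leq\mathsf{cat}(\Sigma_g^n)\leq 2n+1$ follows from property~(5) of the Properties theorem. The lower bound $\geq 2n$ follows from exhibiting $2n-1$ mod-$2$ effective zero divisors in $H^*(\Sigma_g^n;\mathbb{Z}_2)$ whose product is nonzero, by extending the mod-$2$ analogue of Farber's $\mathsf{TC}_2(\Sigma_g)=5$ calculation (where already $\bar a_1\bar b_1\bar a_2$ cups to the nonzero class $c\otimes a_2+a_2\otimes c$ in $H^3(\Sigma_g^2;\mathbb{Z}_2)$) across all $n$ factors.

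The main obstacle is this final cup-length computation for $g\geq 2$. Naive choices such as $\bar a_1\bar b_1\bar a_2\bar b_2$ vanish modulo $2$ (they equal $2(c\otimes c)$ integrally, which is Farber's original integer-cohomology class), so one cannot simply cup four basic zero divisors per pair of handles. Instead, one must incorporate asymmetric off-diagonal classes such as $a_i\otimes b_j+b_j\otimes a_i$ for $i\neq j$, and thread them judiciously across the $n$ factors of $\Sigma_g^n$ so that the resulting $(2n-1)$-fold product lands in a mod-$2$ class that is not a sum of terms cancelling in pairs. Verifying nontriviality of such a $(2n-1)$-fold product in the presence of the many mod-$2$ identifications is the delicate technical step.
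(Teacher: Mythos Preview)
Your strategy matches the paper's: trivial mod-$2$ action of the reflection, hence ordinary zero divisors are effective zero divisors, combined with the upper bound $\mathsf{TC}^{\mathbb{Z}_2}_{\mathsf{effv},n}\leq\mathsf{TC}_n$. The $g=1$ case is handled exactly as in the paper.

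The gap is in the $g\geq 2$ lower bound. You correctly identify that a $(2n-1)$-fold mod-$2$ product is needed and that the naive four-fold product $\bar a_1\bar b_1\bar a_2\bar b_2$ dies modulo $2$, but you then stop, calling the verification ``delicate'' and proposing to use mixed classes of type $a_i\otimes b_j+b_j\otimes a_i$. This is both incomplete and more complicated than necessary. The paper's resolution is direct: take the $2(n-1)$ standard classes $A_i=a_{1,1}-a_{1,i}$, $B_i=b_{1,1}-b_{1,i}$ for $2\leq i\leq n$, together with a single extra class $C=a_{2,1}-a_{2,2}$ built from the second handle (this is where $g\geq 2$ enters). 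One then checks $C\,A_2B_2\cdots A_nB_n\neq 0$ in $H^{2n-1}(\Sigma_g^n;\mathbb{Z}_2)$; for instance, when $n=2$ this product equals $a_2\otimes c+c\otimes a_2$, and the general case follows by the same bookkeeping since each $A_iB_i$ contributes a $c$ in factor $i$ while $C$ supplies the missing degree-one class in factors $1$ and $2$. No off-diagonal $a_i\otimes b_j$ terms are needed; the point you were missing is that one only needs \emph{one} additional zero divisor beyond the $2(n-1)$ torus-style classes, and the second handle furnishes it cheaply via $a_2$.
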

\begin{proof}
We know that $\mathsf{TC}^{\mathbb{Z}_2}_{\mathsf{effv},n}(\Sigma_g^\mathsf{ref}) \leq \mathsf{TC}_n(\Sigma_g)$. On the other hand, the action of $\mathbb{Z}_2$ on the integral
cohomology of $\Sigma_g$ is such that $a_i\mapsto -a_i$ and $b_i\mapsto b_i$. Therefore in mod-2 cohomology this action becomes trivial, and
the zero divisors of the usual diagonal are effective zero divisors.

When $g=1$, the homological lower bound of $\mathsf{TC}^{\mathbb{Z}_2}_{\mathsf{effv},n}(\Sigma_1^\mathsf{ref})$ yields the desired result as
the value of $\mathsf{TC}_n(\Sigma_1)$ is realized by the same homological lower bound.

When $g>1$, we take the usual zero divisor except one. More precisely, we use
$A_i = a_{1,1} - a_{i,i}, B_i = b_{1,1} - b_{i,i}$ for $i\geq 2$, and $C=a_{2,1} - a_{2,2}$. They are effective zero divisors
and $C A_2 B_2\cdots A_n B_n \neq 0$. 
\end{proof}

Note that in this case the exact value of $\mathsf{TC}^{\mathbb{Z}_2}_{\mathsf{effv},n}(\Sigma_g^\mathsf{ref})$ may be decided by means of obstruction theory. However,
the corresponding calculations are complicated and will appear elsewhere.


\subsection{The rotation action}
Recall that the rotation action on $\Sigma_{2l+1}$ has as quotient $\Sigma_{l+1}$. 

\begin{theorem}
Let $l\geq 0$. For the rotation action on $\Sigma_{2l+1}$, the three terms in Proposition~\ref{inequality} are equal, that is,
\[ \mathsf{TC}^{\mathbb{Z}_2}_{\mathsf{effv},n}(\Sigma_{2l+1}^{\mathsf{rot}}) = \mathsf{TC}^{\mathbb{Z}_2}_{\mathsf{effl},n}(\Sigma_{2l+1}^{\mathsf{rot}}) = \mathsf{TC}_n(\Sigma_{l+1}). \]
\end{theorem}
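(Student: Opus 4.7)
From Proposition \ref{inequality}, the chain
\[
\mathsf{TC}^{\mathbb{Z}_2}_{\mathsf{effv},n}(\Sigma_{2l+1}^{\mathsf{rot}}) \leq \mathsf{TC}^{\mathbb{Z}_2}_{\mathsf{effl},n}(\Sigma_{2l+1}^{\mathsf{rot}}) \leq \mathsf{TC}_n(\Sigma_{l+1})
\]
is already in hand, so my plan is to close it by establishing the matching lower bound $\mathsf{TC}_n(\Sigma_{l+1}) \leq \mathsf{TC}^{\mathbb{Z}_2}_{\mathsf{effv},n}(\Sigma_{2l+1}^{\mathsf{rot}})$ through effective zero-divisors pulled back from the quotient surface.

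The pullback square in the proof of Proposition \ref{inequality} furnishes the commutative identity $e_n \circ \tilde{c} = \pi^n \circ \epsilon_n$, where $\pi : \Sigma_{2l+1} \to \Sigma_{l+1}$ is the orbit projection and $\tilde{c} : M^{\mathbb{Z}_2}_n(\Sigma_{2l+1}) \to M_n(\Sigma_{l+1})$ is the induced map on multipath spaces. Dualizing gives $\epsilon_n^* \circ (\pi^n)^* = \tilde{c}^* \circ e_n^*$, so for any zero-divisor $u \in \ker(e_n^*) \subseteq H^*(\Sigma_{l+1}^n;\mathbb{Q})$ the pullback $(\pi^n)^*(u)$ is an effective zero-divisor. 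Given a family $u_1,\ldots,u_m$ with $u_1 \cdots u_m \neq 0$, I need the product $(\pi^n)^*(u_1 \cdots u_m)$ to remain nonzero; this reduces to injectivity of $(\pi^n)^*$ on rational cohomology. Since $\pi$ is a free double cover, the classical transfer $\tau$ satisfies $\tau \circ \pi^* = 2 \cdot \mathrm{id}$ on $H^*(\Sigma_{l+1};\mathbb{Q})$, so $\pi^*$ is injective, and by Künneth so is $(\pi^n)^* = (\pi^*)^{\otimes n}$.

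It remains to invoke the known computations of $\mathsf{TC}_n(\Sigma_g)$: for all $g \geq 1$ the rational zero-divisor cup length equals $\mathsf{TC}_n(\Sigma_g) - 1$, giving $2n-1$ when $g=1$ and $2n+1$ when $g \geq 2$. Applied with $g = l+1$, the pullback construction above produces $\mathsf{TC}^{\mathbb{Z}_2}_{\mathsf{effv},n}(\Sigma_{2l+1}^{\mathsf{rot}}) \geq \mathsf{TC}_n(\Sigma_{l+1})$, and the triple equality follows by squeezing against the upper bound of Proposition \ref{inequality}. The main obstacle is the explicit Künneth assembly of an appropriate zero-divisor family in $H^*(\Sigma_{l+1}^n;\mathbb{Q})$ using the generators $a_i, b_i, c$ recalled earlier in the section, together with confirmation that its top-degree cup product is nonzero; the torus case $l=0$ and the higher-genus case $l \geq 1$ will need to be treated separately because the relevant cup products have different lengths. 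Once this cohomological input is secured, the rest of the argument is purely formal.
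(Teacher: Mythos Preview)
Your proposal is correct and follows the same overall strategy as the paper: bound $\mathsf{TC}^{\mathbb{Z}_2}_{\mathsf{effv},n}$ from below by producing enough effective zero-divisors to match the known value of $\mathsf{TC}_n(\Sigma_{l+1})$, then squeeze against Proposition~\ref{inequality}. The difference is in packaging. The paper works directly in $H^*(\Sigma_{2l+1}^n;\mathbb{Z})$, writing down explicit $\mathbb{Z}_2$-invariant classes ($a_1+a_{2l+1}$, $b_1+b_{2l+1}$, $a_{l+1}$, $b_{l+1}$), checking by hand that each of the resulting K\"unneth differences lies in the kernel of every twisted diagonal, and computing their product as $2^{n}\, c\otimes\cdots\otimes c$. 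You instead factor through the quotient: pull back the standard zero-divisors of $\Sigma_{l+1}$ along $(\pi^n)^*$, use the commutative square from Proposition~\ref{inequality} to see that the pullbacks are effective zero-divisors, and invoke the transfer identity $\tau\circ\pi^*=2\cdot\mathrm{id}$ over $\mathbb{Q}$ to guarantee the product survives. These are literally the same classes---the paper's invariant generators are precisely the $\pi^*$-images of the standard generators of $H^*(\Sigma_{l+1})$---so the two arguments coincide once unwound. Your transfer formulation has the virtue of reducing everything to the already-documented rational zero-divisor cup length of $\Sigma_{l+1}$ without repeating the cup-product computation, and it makes transparent why $\mathbb{Z}_2$-invariance is exactly what is needed; the paper's version is self-contained and exhibits the explicit nonzero coefficient.
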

\begin{proof} 
The action of $\mathbb{Z}_2$ on the cohomology of $\Sigma_{2l+1}^{\mathsf{rot}}$ interchanges $a_1$ and $a_{2l+1}$ and likewise $b_1$ and $b_{21+1}$,
and leaves $a_{l+1}$ and $b_{l+1}$ fixed.

When $l=0$, the restriction of $\Delta^{\mathbb{Z}_2}_n$ to each component of $M^{\mathbb{Z}_2}_n(\Sigma_1^{\mathsf{rot}})$ is
the usual diagonal. Thus the homological lower bound of $\mathsf{TC}^{\mathbb{Z}_2}_{\mathsf{effv},n}(\Sigma_{1}^{\mathsf{rot}})$ is the same as that of $\mathsf{TC}_n(\Sigma_1)$.

When $l>0$, let
\begin{align*}
A_i &=(a_{1,1} + a_{2l+1,1})- (a_{1,i}  + a_{2l+1,i} ) = (a_1+a_{2l+1})\otimes 1\otimes \cdots\otimes 1 - 1\otimes \cdots \otimes(a_1 + a_{2l+1} )\otimes \cdots\otimes 1 \\
B_i &= (b_1+b_{2l+1})\otimes 1\otimes \cdots\otimes 1 - 1\otimes \cdots \otimes(b_1 + b_{2l+1} )\otimes \cdots\otimes 1 \\
C_1 &= a_{l+1}\otimes 1\otimes \cdots\otimes 1 - 1\otimes a_{l+1} \otimes 1 \cdots \otimes 1 \\
C_2 &= b_{l+1}\otimes 1\otimes \cdots\otimes 1 - 1\otimes b_{l+1} \otimes 1 \cdots \otimes 1 \\   
\end{align*}
where $i\geq 2$ and both $a_1 + a_{2l+1}$ and $b_1 + b_{2l+1}$ are in the $i^{th}$ factor.
These are nontrivial classes in the kernel of $\Sigma^{\mathbb{Z}_2}_n$ and 
satisfy $C_1 C_2 A_2 B_2 \cdots A_n B_n = 2^{n} c\otimes \cdots\otimes c$.
The result follows since the value of $\mathsf{TC}_n(\Sigma_l)$ is the same as the value afforded by the homological lower bounds that we just obtained.
\end{proof}

\begin{proposition}
$\mathsf{TC}^{\mathbb{Z}_2}_{\mathsf{orb},n}(\Sigma_{2l+1}^{\mathsf{rot}}) = \mathsf{TC}_n(\Sigma_{l+1})$ when $l>0$, and 
$\mathsf{TC}^{\mathbb{Z}_2}_{\mathsf{orb},n}(\Sigma_1) \in \{ 2n, 2n+1  \}$.
\end{proposition}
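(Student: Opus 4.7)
The plan is to sandwich $\mathsf{TC}^{\mathbb{Z}_2}_{\mathsf{orb},n}(\Sigma_{2l+1}^{\mathsf{rot}})$ between the lower bound $\mathsf{TC}_n(\Sigma_{l+1})$ supplied by Proposition~\ref{inequality} and the upper bound $\mathsf{cat}((\Sigma_{l+1})^n)$ supplied by part~(6) of the Properties theorem. Since every closed orientable surface of positive genus has $\mathsf{cat}=3$, the subadditivity of Lusternik--Schnirelmann category under products yields $\mathsf{cat}((\Sigma_{l+1})^n)\leq 2n+1$ for every $l\geq 0$.

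When $l>0$ the theorem just proved shows that the product $C_1C_2A_2B_2\cdots A_nB_n=2^{n}c\otimes\cdots\otimes c$ is non-trivial on $(\Sigma_{l+1})^n$, forcing $\mathsf{TC}_n(\Sigma_{l+1})\geq 2n+1$; combining this with the upper bound gives $\mathsf{TC}_n(\Sigma_{l+1})=2n+1$. The sandwich
\[
2n+1=\mathsf{TC}_n(\Sigma_{l+1})\leq\mathsf{TC}^{\mathbb{Z}_2}_{\mathsf{orb},n}(\Sigma_{2l+1}^{\mathsf{rot}})\leq\mathsf{cat}((\Sigma_{l+1})^n)\leq 2n+1
\]
then collapses to the claimed equality with $\mathsf{TC}_n(\Sigma_{l+1})$.

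When $l=0$ the quotient surface is again $\Sigma_1$ and now $\mathsf{TC}_n(\Sigma_1)=2n-1$, so the sandwich only yields $2n-1\leq\mathsf{TC}^{\mathbb{Z}_2}_{\mathsf{orb},n}(\Sigma_1^{\mathsf{rot}})\leq 2n+1$. To produce the stated range it suffices to improve the lower bound to $2n$ by exhibiting $2n-1$ orbital zero-divisors in mod-$2$ cohomology whose product is non-zero. I would exploit that the rotation covering $\pi:\Sigma_1^{\mathsf{rot}}\to\Sigma_1$ has degree $2$: modelling the torus as $\mathbb{R}^2/\mathbb{Z}^2$ with action $(x,y)\mapsto(x+\tfrac12,y)$, one reads off that $\pi^*\bar a_1=2a_1\equiv 0\pmod 2$ while $\pi^*\bar b_1=b_1$, where $\bar a_1,\bar b_1$ are the standard $H^1$--generators of the quotient. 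Taking $p_i^*(\bar a_1)$ for $i=1,\ldots,n$ together with $p_i^*(\bar b_1)+p_1^*(\bar b_1)$ for $i=2,\ldots,n$ gives $2n-1$ classes in $\ker\mathsf{e}_n^*$, since $\pi^*\bar a_1=0$ and $\pi^*\bar b_1+\pi^*\bar b_1=0$ mod $2$.

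The main work, and the step I expect to be the principal obstacle, is showing that the product of these $2n-1$ classes is non-zero. Using $\bar a_1^2=\bar b_1^2=0$ and $\bar a_1\bar b_1=\bar c$, the expansion of $\prod_{i=2}^{n}(p_i^*(\bar b_1)+p_1^*(\bar b_1))$ collapses, via $(p_1^*(\bar b_1))^2=0$, to a sum of $n$ monomials--one for each slot $j\in\{1,\ldots,n\}$ in which $\bar b_1$ fails to appear. Multiplying by $p_1^*(\bar a_1)\cdots p_n^*(\bar a_1)$ then replaces $\bar b_1$ by $\bar c$ in the slots where it appears and leaves $\bar a_1$ in the slot where it does not, giving
\[
\sum_{j=1}^{n}\bar c\otimes\cdots\otimes\bar a_1\otimes\cdots\otimes\bar c,
\]
with $\bar a_1$ in slot $j$ of the $j$-th summand. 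These $n$ basis monomials are linearly independent in $H^{2n-1}((\Sigma_1)^n;\mathbb{Z}_2)$, so the product is non-zero and $\mathsf{TC}^{\mathbb{Z}_2}_{\mathsf{orb},n}(\Sigma_1^{\mathsf{rot}})\geq 2n$, completing the proof.
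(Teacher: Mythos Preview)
Your argument is correct and essentially the same as the paper's: both use the sandwich between $\mathsf{TC}_n(\Sigma_{l+1})$ and $\mathsf{cat}((\Sigma_{l+1})^n)=2n+1$ for $l>0$, and for $l=0$ the paper exhibits the same $2n-1$ mod-$2$ orbital zero-divisors and verifies their product is nonzero by the identical expansion (the paper's $b_1$ plays the role of your $\bar a_1$, reflecting only a labeling convention for which $H^1$-generator the double cover kills). One minor wording slip: the classes $C_1,C_2,A_i,B_i$ from the preceding theorem live in $H^*((\Sigma_{2l+1})^n)$, not in $H^*((\Sigma_{l+1})^n)$; what you actually need at that step is just the standard fact $\mathsf{TC}_n(\Sigma_{l+1})=2n+1$ for $l\geq 1$.
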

\begin{proof}
When $l>0$ we have $\mathsf{TC}^{\mathbb{Z}_2}_{\mathsf{orb},n}(\Sigma_{2l+1}^{\mathsf{rot}}) = 2n+1 = \mathsf{TC}_n(\Sigma_{l+1})$. When $l=0$, the following
commutative diagram (induced by the quotient map $Sigma_{2l+1} \to \Sigma_{l+1}$)
\[
\xymatrix{
P(\Sigma_1^{\mathsf{rot}}) \ar[rr] \ar[d] & & P(\Sigma_1^{\mathsf{rot}}/\mathbb{Z}_2) \ar[d] \\
\Sigma_1^{\mathsf{rot}} \times \Sigma_1^{\mathsf{rot}} \ar[rr] & & \Sigma_1^{\mathsf{rot}}/\mathbb{Z}_2 \times \Sigma_1^{\mathsf{rot}}/\mathbb{Z}_2 
}
\]
can be used to see that $b_{1,1}\ldots b_{1,n}$ and $a_{1,1} + a_{1,i}$ are mod-2 orbital zero divisors ($i\geq 2$). 
Moreover $b_{1,1}\cdots b_{1,n}(a_{1,1} + a_{1,2})\cdots (a_{1,1} + a_{1,n}) \neq 0$. The result follows.
\end{proof}


\subsection{The antipodal action}

Now we consider the antipodal action on $\Sigma_g$ with quotient $N_{g+1}$. 
The mod-2 cohomology ring of $N_{g+1}$ has generators $x_i$ in degree one with $1\leq i \leq g$ and $y$ in degree two subject to the relations $x_i x_j =\delta_{ij} y$. This information will be used to provide lower bounds for the sectional category of the map that defines the effectual 
topological complexity of $\Sigma_g^{\mathsf{ant}}$. The action of $\mathbb{Z}_2$ on the cohomology of $\Sigma_g^{\mathsf{ant}}$
sends $a_1$ to  $-a_g$ and interchanges $b_1$ and $b_g$.

The effective TC when $n=2$ was considered in \cite{cg}. Their calculations can easily be extended to cases when $n>2$. 

\begin{proposition}
$\mathsf{TC}^{\mathbb{Z}_2}_{\mathsf{effv},n}(\Sigma_1^{\mathsf{ant}}) = 2n -1 $, and $\mathsf{TC}^{\mathbb{Z}_2}_{\mathsf{effv},n}(\Sigma_g^{\mathsf{ant}})\in \{ 2n, 2n+1\}$ when $g\geq 2$.
\end{proposition}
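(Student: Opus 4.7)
The plan is to obtain matching upper and lower bounds. The upper bound in both cases comes from the inequality $\mathsf{TC}^{\mathbb{Z}_2}_{\mathsf{effv},n}(X)\leq \mathsf{TC}_n(X)$ (property (4) of the theorem in Section~\ref{properties}), combined with the classical values $\mathsf{TC}_n(\Sigma_1)=2n-1$ and $\mathsf{TC}_n(\Sigma_g)\leq 2n+1$ for $g\geq 2$. The lower bounds are obtained by exhibiting effective zero-divisors whose cup product is nonzero; recall that the sectional category of $\epsilon_n$ is bounded below by one plus the cup-length of these classes.

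For $g=1$, the antipodal action on $H^{*}(\Sigma_1;\mathbb{F}_2)$ is trivial, since $\sigma^{*}a_1=-a_1=a_1$ and $\sigma^{*}b_1=b_1$ mod $2$. Hence every restriction of the saturated diagonal $\Delta^{\mathbb{Z}_2}_n$ to a component of $\Sigma_1\times \mathbb{Z}_2^{n-1}$ induces in mod-$2$ cohomology the same map as the ordinary diagonal, so the standard $\mathsf{TC}_n(\Sigma_1)$ zero-divisors $A_i=a_{1,1}-a_{1,i}$ and $B_i=b_{1,1}-b_{1,i}$ for $2\leq i\leq n$ lie in $\ker \epsilon_n^{*}$, and their product is the well-known nonzero element of $H^{2(n-1)}(\Sigma_1^n;\mathbb{F}_2)$. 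This supplies the matching lower bound $2n-1$.

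For $g\geq 2$, the mod-$2$ action interchanges $a_i\leftrightarrow a_{g+1-i}$ and $b_i\leftrightarrow b_{g+1-i}$; the $\sigma^{*}$-invariant degree-$1$ classes are spanned integrally by $u=a_1-a_g$ and $v=b_1+b_g$. I would construct $2n-1$ effective zero-divisors by combining the ``diagonal'' classes $\alpha_i=u_{[1]}-u_{[i]}$ and $\beta_i=v_{[1]}-v_{[i]}$ for $2\leq i\leq n$, which are effective by $\sigma^{*}$-invariance, with one ``mixed'' degree-$2$ class such as $W=a_1\otimes b_1\otimes 1\otimes\cdots-a_g\otimes b_g\otimes 1\otimes\cdots$, whose effectiveness follows from $a_1b_1-a_gb_g=c-c=0$ and $a_1b_g-a_gb_1=0$. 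For $n=2$, handled in \cite{cg}, a direct calculation gives $W\cdot \alpha_2\beta_2=\pm 2\,c\otimes c\neq 0$ integrally, yielding three effective zero-divisors with nonzero product. Following the paper's remark that the calculations extend readily to higher $n$, one distributes analogous diagonal and mixed classes over different slot-pairs so as to accumulate a factor of $c$ in every slot of $\Sigma_g^n$, producing a nonzero integral multiple of $c^{\otimes n}$.

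The main obstacle is the integral vanishings $u^2=v^2=uv=0$, which---in contrast to the rotation case where $uv=2c$ drives a clean telescoping---prevent reusing slot $1$ as a pivot for $n-1$ consecutive diagonal zero-divisors. Carrying out the extension for general $n$ requires either spreading the $\alpha_i,\beta_i$ across non-overlapping slot pairings supplemented by several mixed divisors $W_{ij}=(a_1)_{[i]}(b_1)_{[j]}-(a_g)_{[i]}(b_g)_{[j]}$, or pulling back zero-divisors from $\Delta_{N_{g+1}}$ via $\pi^n$ (since $\pi^n\circ \Delta^{\mathbb{Z}_2}_n$ factors through $\Delta_{N_{g+1}}$). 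Either route must bypass these degenerations, but ultimately produces $2n-1$ effective zero-divisors with nonzero product; combined with the upper bound $\leq 2n+1$, this yields $\mathsf{TC}^{\mathbb{Z}_2}_{\mathsf{effv},n}(\Sigma_g^{\mathsf{ant}})\in \{2n,2n+1\}$.
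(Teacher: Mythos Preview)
Your argument for $g=1$ is correct and matches the expected approach: the antipodal action is trivial on $H^*(\Sigma_1;\mathbb{F}_2)$, so every twisted diagonal induces the same map as the ordinary diagonal, and the standard torus zero-divisors do the job.

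For $g\geq 2$ the upper bound is fine, but your lower-bound sketch has a genuine gap for $n\geq 3$, and both routes you propose run into structural obstructions rather than mere bookkeeping. First, the degree-$1$ effective zero-divisors are \emph{exactly} the classes $\sum_i z_{[i]}$ with each $z_i$ $\sigma^*$-invariant and $\sum z_i=0$ (this follows by comparing the $2^{n-1}$ twisted-diagonal conditions). Since $\sigma$ reverses orientation, $\sigma^*c=-c$, so the cup product of any two $\sigma^*$-invariant degree-$1$ classes lands in the invariant part of $H^2(\Sigma_g;\mathbb{Z})$, which is zero; the same vanishing persists mod $2$ by direct computation (e.g.\ $(a_1+a_g)(b_1+b_g)=2c=0$). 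Hence the invariant subspace $V^{\mathrm{inv}}\subset H^1$ is totally isotropic, and the subalgebra of $H^*(\Sigma_g^n)$ generated by all degree-$1$ effective zero-divisors has top degree $n$. In particular your product $\prod_{i\geq 2}\alpha_i\beta_i$ already vanishes for $n=3$, and no rearrangement of the slot pairings fixes this: you simply cannot have more than $n$ degree-$1$ effective zero-divisors with nonzero product. The degree constraint $m+2k\leq 2n$ with $m+k=2n-1$ then forces $m\geq 2n-2>n$, so adding mixed classes $W_{ij}$ cannot rescue the count. Second, the pullback route through $N_{g+1}$ fails because $\pi^*\colon H^2(N_{g+1};\mathbb{F}_2)\to H^2(\Sigma_g;\mathbb{F}_2)$ is zero (by the Gysin sequence, $y=x_i\cdot w_1$ lies in the image of multiplication by the orientation class, hence in $\ker\pi^*$), so any long product of $N_{g+1}$-zero-divisors, which necessarily lives in top degree, pulls back to zero.

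The paper's own proof is just the sentence that the $n=2$ calculations of \cite{cg} ``can easily be extended'', so there is no detailed argument to compare against; but the point is that whatever \cite{cg} does to reach the bound $4$ for $n=2$ must be something beyond the elementary effective-zero-divisor count you are attempting, and it is \emph{that} argument (likely using the surface-group structure or obstruction-theoretic input) which is meant to extend. Your assertion that ``either route \ldots\ ultimately produces $2n-1$ effective zero-divisors with nonzero product'' is precisely what fails.
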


Now we will focus on effectual TC of $\Sigma_g^{\mathsf{ant}}$. Let us consider the case when $g=1$. The map $\pi: \Sigma_1 \to N_{2}$ satisfies: 
$\pi^*( x_i) = a_1 + b_1$, for $i=1,2$. Recall that $\mathsf{TC}^{\mathbb{Z}_2}_{\text{effl},n}(\Sigma_g)$ is bounded below by the cup-length of the kernel of the map induced in cohomology by $1\times \pi^{n-1} \colon \Sigma_1  \to \Sigma_1\times (N_2)^{n-1}$.  We will use double indices to denote 
classes in the cohomology of $\Sigma_1 \times (N_2)^{n-1}$ of the form $1\otimes\cdots\otimes 1\otimes u \otimes 1\otimes \cdots\otimes 1$ so 
that the second index stands for the location of the class $u$ in the tensor product.

We have: 
\[
\begin{aligned} 
(1\times\pi^{n-1})^*(a_{1,1} + b_{1,1} + x_{1,2})=0, \\ 
(1\times \pi^{n-1})^*(x_{1,j} + x_{1,j-1})=0, \\ (1\times\pi^{n-1})^*(x_{1,j} + x_{2,j-1})=0,  
\end{aligned}
\]
for $3 \leq j\leq n$, and
\[
\begin{aligned} 
(a_{1,1} + b_{1,1} + x_{1,2} )^3 (x_{1,3} + x_{1,2})(x_{1,3} + x_{2,2}) \cdots    (x_{1,n} + x_{1,n-1})(x_{1,n} + x_{2,n-1})\\
= (a_{1,1} + b_{1,1} )x_{1,2}^2 (x_{1,3}^2 + x_{1,3}x_{2,2} + x_{1,2}x_{1,3}) \cdots (x_{1,n}^2 + x_{1,n}x_{2,n-1} + x_{1,n-1}x_{1,n})\\
= (a_{1,1} + b_{1,1} )x_{1,2}^2 x_{1,3}^2 \cdots x_{1,n}^2 \neq 0.
\end{aligned}
\]
Therefore $2n\leq \mathsf{TC}^{\mathbb{Z}_2}_{\text{effl},k} (\Sigma_{1}^{\mathsf{ant}})$. These calculations can be carried to values of $g\geq 2$ 
to obtain the following result:

\begin{theorem}
If $g \geq 1$, then $2n\leq \mathsf{TC}^{\mathbb{Z}_2}_{\mathsf{effl},k} (\Sigma_{g}^{\mathsf{ant}}) \leq \mathsf{TC}_n(N_{g+1}) = \mathsf{TC}^{\mathbb{Z}_2}_{\mathsf{orb},n}(\Sigma_g^{\mathsf{ant}}) = 2n+1$.
\end{theorem}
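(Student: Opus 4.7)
My plan is to split the chain into four pieces: (i) the effectual lower bound $\mathsf{TC}^{\mathbb{Z}_2}_{\mathsf{effl},n}(\Sigma_g^{\mathsf{ant}}) \geq 2n$; (ii) the middle inequality, which is immediate from Proposition~\ref{inequality}; (iii) the common upper bound $\mathsf{TC}_n(N_{g+1}),\ \mathsf{TC}^{\mathbb{Z}_2}_{\mathsf{orb},n}(\Sigma_g^{\mathsf{ant}}) \leq 2n+1$ coming from Lusternik--Schnirelmann category; and (iv) a matching lower bound $\mathsf{TC}_n(N_{g+1}) \geq 2n+1$ which, once combined with (iii) and Proposition~\ref{inequality}, forces the remaining two equalities simultaneously.

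For step (i), I would carry the mod-2 cohomological calculation performed just above the theorem for $g=1$ over to every $g\geq 2$. The only geometric inputs it uses are that $N_{g+1}$ still has degree-one mod-2 generators $x_1,x_2$ with $x_i x_j=\delta_{ij}y$, and that the antipodal double cover $\pi$ satisfies $\pi^*(x_1)\equiv\pi^*(x_2)\pmod 2$; the first is the standard presentation, and the second follows from the fact that the antipodal involution on $\Sigma_g$ pairs the first handle with the last, so the transfer places $\pi^*(x_i)$ in the invariant subspace spanned by that handle pair. With this in place, the same classes $a_{1,1}+b_{1,1}+x_{1,2}$, $x_{1,j}+x_{1,j-1}$, $x_{1,j}+x_{2,j-1}$ and the same telescoping expansion to $(a_{1,1}+b_{1,1})\, x_{1,2}^2 x_{1,3}^2 \cdots x_{1,n}^2 \neq 0$ deliver $2n-1$ effectual zero divisors with nonzero cup product. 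Step (iii) follows from the Properties section, using $\mathsf{TC}_n(Y)\leq\mathsf{cat}(Y^n)$ and $\mathsf{TC}^{\mathbb{Z}_2}_{\mathsf{orb},n}(X)\leq\mathsf{cat}((X/G)^n)$, together with $\mathsf{cat}(N_{g+1})=3$ and the standard subadditivity $\mathsf{cat}(Y^n)\leq n\,\mathsf{cat}(Y)-(n-1)$.

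The main obstacle is step (iv). Working in $H^*(N_{g+1}^n;\mathbb{Z}/2)$ with relations $x_{i,j}x_{k,j}=\delta_{ik}y_j$ and $y_j^2=0$, a purely multiplicative zero-divisor computation stalls near cup length $2n-1$: every square $(x_{i,j}+x_{i,k})^2$ collapses to $y_j+y_k$, and then $(y_j+y_k)^2 = 2y_jy_k = 0$, so repeated squaring kills products. I would therefore supplement the cup-length bound with a finer sectional-category argument, either by running a Ganea- or Sullivan-model obstruction in the spirit of Cohen--Vandembroucq's proof that $\mathsf{TC}(N_h)=5$ for $h\geq 2$, iterated to higher $n$, or by appealing to the corresponding known value of $\mathsf{TC}_n(N_h)$ from the existing literature on higher topological complexity of non-orientable surfaces. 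Once $\mathsf{TC}_n(N_{g+1})\geq 2n+1$ is in hand, Proposition~\ref{inequality} and step (iii) sandwich both $\mathsf{TC}_n(N_{g+1})$ and $\mathsf{TC}^{\mathbb{Z}_2}_{\mathsf{orb},n}(\Sigma_g^{\mathsf{ant}})$ to the common value $2n+1$, finishing the theorem.
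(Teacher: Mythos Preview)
Your plan matches the paper's approach. The paper gives no formal proof for this theorem; it simply says ``these calculations can be carried to values of $g\geq 2$'' after the $g=1$ effectual zero-divisor computation, leaving the rest to Proposition~\ref{inequality}, the category upper bound, and the literature value $\mathsf{TC}_n(N_{g+1})=2n+1$ (the references \cite{cv} and \cite{ag} are in the bibliography precisely for this). Your identification of step~(iv) as the only nontrivial point, and your proposal to resolve it either by citing the known higher-$\mathsf{TC}$ computation for non-orientable surfaces or by redoing the obstruction-theoretic argument, is exactly what the paper is tacitly doing.
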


\begin{theorem}
$\mathsf{TC}_{\mathsf{effl},n}^{\mathbb{Z}_2}(\Sigma_1^{\mathsf{ant}}) = 2n$.
\end{theorem}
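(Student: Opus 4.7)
The lower bound $\mathsf{TC}^{\mathbb{Z}_2}_{\mathsf{effl},n}(\Sigma_1^{\mathsf{ant}}) \geq 2n$ is supplied by the preceding theorem, so the task is to establish the matching upper bound. My plan is to exploit the Lie group structure on $\Sigma_1^{\mathsf{ant}} = T^2$ to decompose the effectual diagonal as a product of an identity map and a smaller effectual diagonal, after which a standard LS-category bound delivers the result.

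I realize $\Sigma_1^{\mathsf{ant}}$ as $T^2 = \mathbb{R}^2/\mathbb{Z}^2$ with the free $\mathbb{Z}_2$-action $\tau(x,y) = (x + 1/2, -y)$ and quotient $N_2$. The crucial observation is that the subgroup $S^1 \times \{0\} \subset T^2$ commutes with $\tau$, so its translation action descends to an action of $S^1$ on $N_2$. Using this, I define the homeomorphism
\begin{gather*}
\psi\colon T^2 \times N_2^{n-1} \longrightarrow S^1 \times S^1 \times N_2^{n-1}, \\
(p, [z_2], \ldots, [z_n]) \longmapsto \bigl(p_x,\, p_y,\, [z_2] - (p_x, 0),\, \ldots,\, [z_n] - (p_x, 0)\bigr),
\end{gather*}
where subtraction denotes the descended $S^1$-action on $N_2$. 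Together with the canonical splitting $\phi\colon T^2 \cong S^1 \times S^1$, and using the identity $\pi(a, b) - (a, 0) = \pi(0, b)$, one verifies that the effectual diagonal $\Delta_{\mathsf{effl}}\colon T^2 \to T^2 \times N_2^{n-1}$, $y \mapsto (y, \pi(y), \ldots, \pi(y))$, satisfies $\psi \circ \Delta_{\mathsf{effl}} \circ \phi^{-1} = \mathrm{id}_{S^1} \times \Delta'$, where $\Delta'\colon S^1 \to S^1 \times N_2^{n-1}$ is the analogous effectual diagonal $b \mapsto (b, \pi(0, b), \ldots, \pi(0, b))$ for the slice $\{0\} \times S^1$.

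From the product inequality $\mathsf{secat}(f \times g) \leq \mathsf{secat}(f) + \mathsf{secat}(g) - 1$ together with $\mathsf{secat}(\mathrm{id}) = 1$, I deduce $\mathsf{TC}^{\mathbb{Z}_2}_{\mathsf{effl},n}(T^2) = \mathsf{secat}(\Delta_{\mathsf{effl}}) \leq \mathsf{secat}(\Delta')$. The universal bound $\mathsf{secat}(\Delta') \leq \mathsf{cat}(S^1 \times N_2^{n-1})$ then reduces the problem to a cat-of-product computation: the subadditive bound $\mathsf{cat}(A \times B) \leq \mathsf{cat}(A) + \mathsf{cat}(B) - 1$ combined with $\mathsf{cat}(S^1) = 2$ and $\mathsf{cat}(N_2) = 3$ yields $\mathsf{cat}(S^1 \times N_2^{n-1}) \leq 2 + 2(n-1) = 2n$. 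This gives the desired $\mathsf{TC}^{\mathbb{Z}_2}_{\mathsf{effl},n}(T^2) \leq 2n$. The main point to verify carefully is the product decomposition $\psi \circ \Delta_{\mathsf{effl}} \circ \phi^{-1} = \mathrm{id} \times \Delta'$; once this identification is in hand, the rest is routine bookkeeping.
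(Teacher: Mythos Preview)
Your proposal is correct and takes a genuinely different route from the paper. The paper establishes the upper bound by constructing an explicit effectual motion planner on $\Sigma_1^{\mathsf{ant}}\times (K_2)^{n-1}$ with $2n$ domains of continuity, extending the $n=2$ planner from \cite{cggz} and organizing the domains via ``characteristic tuples'' that record how each target orbit meets a distinguished $1$-complex $A_x$ in the half-torus $M_x$. Your argument instead exploits the abelian Lie group structure: using that the circle of horizontal translations commutes with the involution $\tau$, you shear the effectual diagonal into the product $\mathrm{id}_{S^1}\times\Delta'$, and then a single line of $\mathsf{secat}$/$\mathsf{cat}$ bookkeeping gives $\mathsf{secat}(\Delta')\leq\mathsf{cat}(S^1\times N_2^{\,n-1})\leq 2n$. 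What each approach buys: the paper's construction produces an explicit motion planner (potentially of independent interest for applications) and makes no appeal to group structure beyond the product decomposition $S^1\times S^1$; your argument is dramatically shorter, reveals \emph{why} the answer is $2n$ (one circle factor contributes nothing, the other contributes $\mathsf{cat}$ of its effectual target), and would adapt readily to other $G$-spaces admitting a compatible free circle action. The only point worth stating more explicitly in your write-up is that $\mathsf{secat}$ is unchanged by pre- and post-composition with homeomorphisms and by replacing $\varepsilon_n$ with its homotopy-equivalent model $\Delta_{\mathsf{effl}}$, both of which are standard.
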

\begin{proof}
We will now describe an explicit higher effectual motion planner $s$ that realizes $2n$ domains of continuity. Our construction is an extension of the construction in \cite{cggz}, but we make some important modifications which allow for the extension. For ease of reference, we will use similar notation when possible. Diagrams will be provided for $n = 3$.

First, consider the torus $\Sigma_1^{\mathsf{ant}} = S^1 \times S^1 \subseteq \mathbb{C} \times \mathbb{C}$. Any point $x \in \Sigma_1^{\mathsf{ant}}$ has two factors, $(x',\, x'')$, each corresponding to a copy of $S^1$. The horizontal component will be given by $x'$ and the vertical component by $x''$. Thus, we define $V_x = \{x'\} \times S^1$ and $H_x = S^1 \times \{x''\}$ as vertical and horizontal 1-dimensional sets which extend perpendicularly from $x$. See Figure \ref{fig:V-H}. In this construction, the antipodal action $\sigma$ acts s.t. $\sigma(x) = (-x', \overline{x''})$, where $\overline{x''}$ is the complex conjugate of $x''$.

Our motion planner $s$ has input of the form $a = (x,z_1,z_2,\dots,z_{n-1}) \in \Sigma_1^{\mathsf{ant}} \times (\Sigma_1^{\mathsf{ant}}/\mathbb{Z}_2)^{n-1}$ and outputs a multipath, $s(a) \in (\Sigma_1)^{J_n}$. The multipath $s(a) = (c_x,\alpha_1,\dots,\alpha_{n-1})$, where $c_x$ is the constant path at $x$, is based at the point $x \in \Sigma_1^{\mathsf{ant}}$. We will describe the recipe for each component, $\alpha_i$.

Since effectual motion planning only requires motion planning to a single representative, our path $\alpha_i$ from $x$ to $z_i$ will be restricted to half of the torus. We formalize this by defining two vertical boundary circles $C_x^I = V_{b}$ and $C_x^D = V_{-b}$ where $b = xe^{-i\pi/2}$ and $-b = xe^{i\pi/2}$. Furthermore, $M_x$ is the half torus containing $x$ and boundary sets $C_x^I$ and $C_x^D$. Explicitly, $M_x = \{xe^{i\theta} \ | \ -\frac{\pi}{2} \leq \theta \leq \frac{\pi}{2}\} \times S^1$. See Figure \ref{fig:M_x} and Figure \ref{fig:cylinder}.

\begin{figure}
    \centering
    \includegraphics[height=5cm]{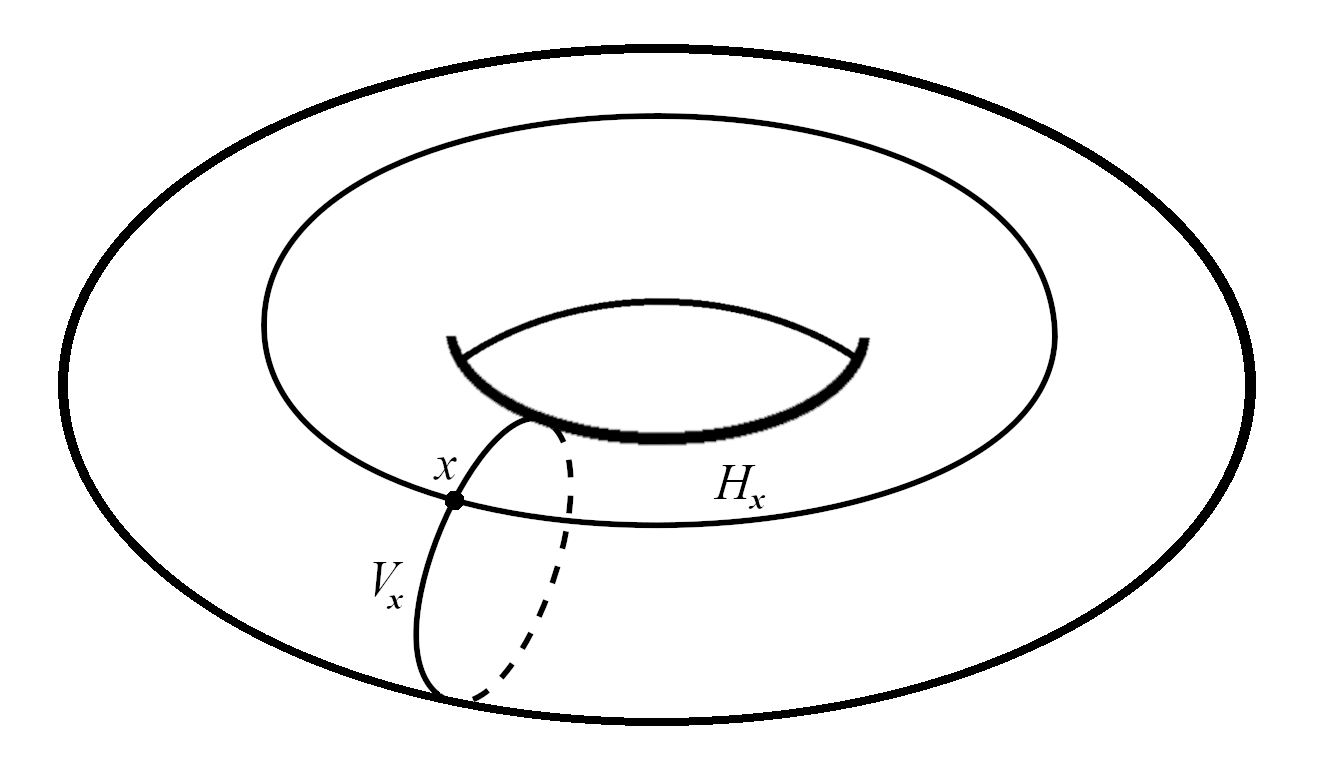}
    \caption{$V_x$ and $H_x$ for a given $x \in \Sigma_1^{\mathsf{ant}}$.}
    \label{fig:V-H}
\end{figure}

\begin{figure}
    \centering
    \includegraphics[height=5.2cm]{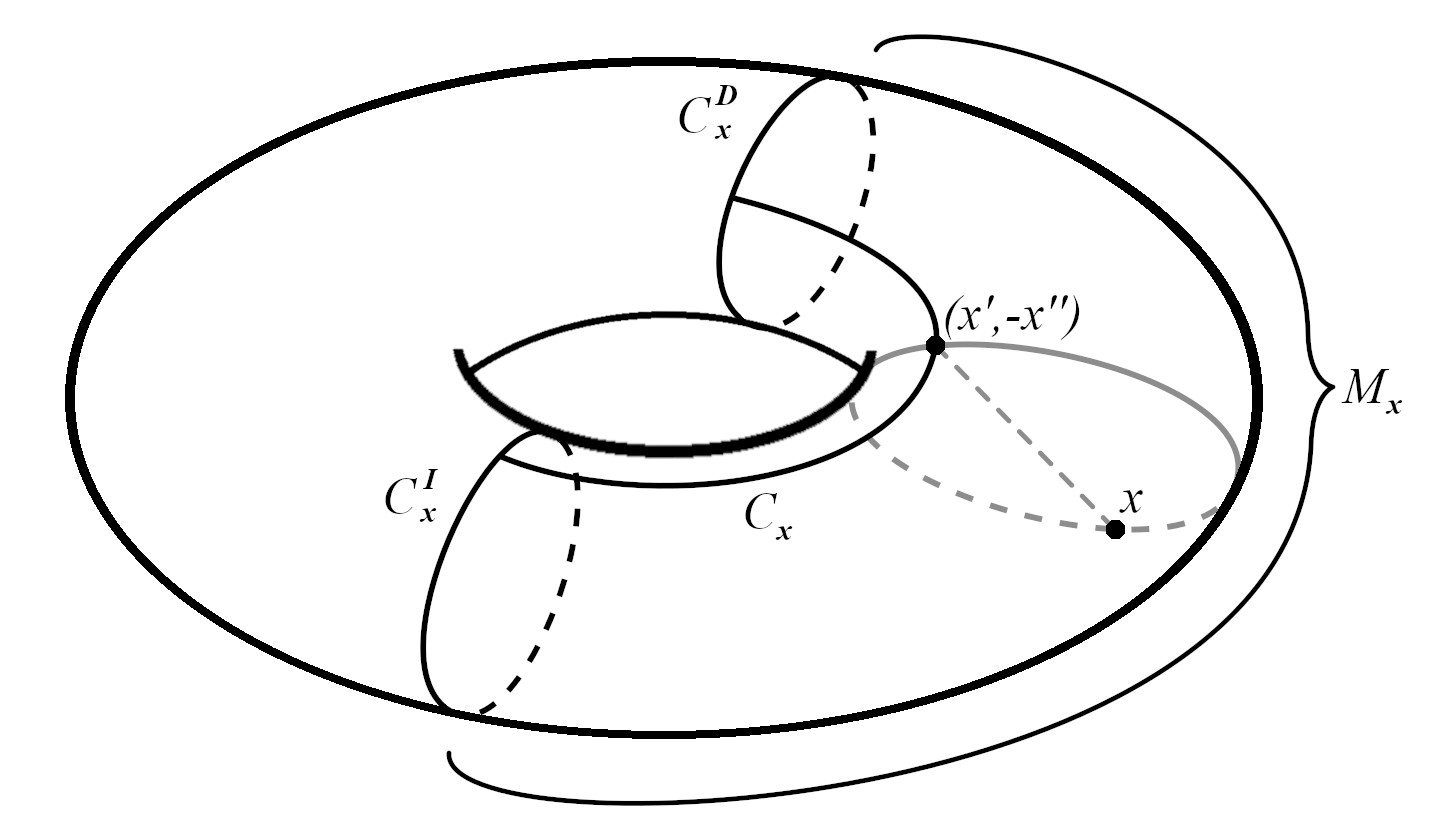}
    \caption{The 2-dimensional set $M_x$ and its important component sets for a given $x \in \Sigma_1^{\mathsf{ant}}$.}
    \label{fig:M_x}
\end{figure}

Additionally, define the following sets:
\begin{align*}
    &E^+ := S^1 \times \{1\} \\
    &E^- := S^1 \times \{-1\} \\
    &E := E^- \sqcup E^+ \text{, the equator of the torus.} \\
    &C_x := (S^1 \times \{-x''\}) \cap M_x \\
    &A_x := C_x^I \cup C_x \cup C_x^D \\
    &\{b_x\} := C_x^I \cap C_x \\
    &a_x := \sigma(b_x) 
\end{align*}
\begin{figure}
    \centering
    \includegraphics[height=7cm]{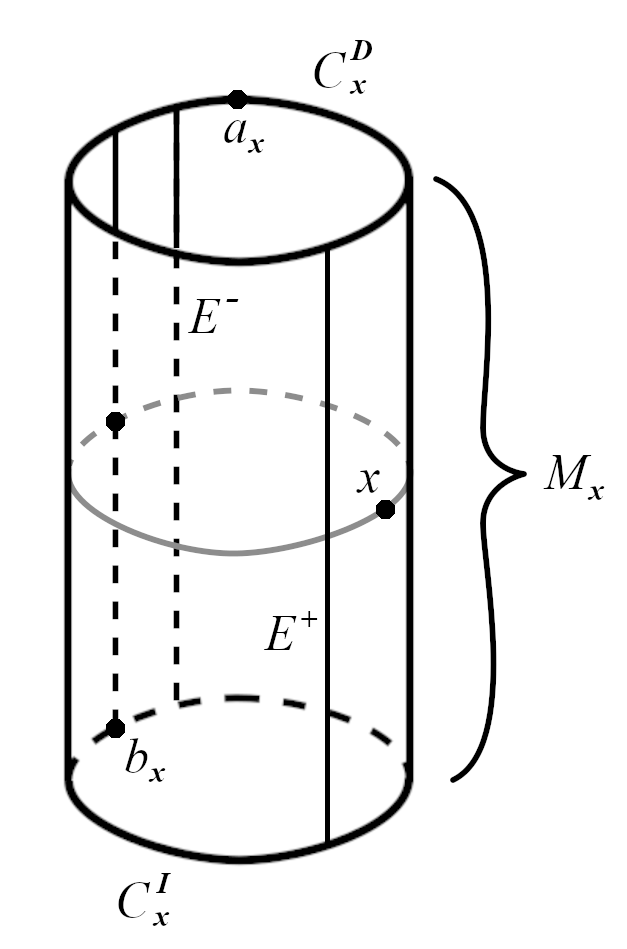}
    \caption{The cylindrical representation of $M_x$.}
    \label{fig:cylinder}
\end{figure}
The previous paragraphs have included only minor modifications of the construction given in \cite{cggz}. The challenge of higher motion is proposing an organization strategy which extends for $n \geq 2$. This challenge is overcome by what we call the characteristic numbers of $a$. For a given $a = (x,z_1,z_2,\dots,z_{n-1}) \in \Sigma_1^{\mathsf{ant}} \times (K_2)^{n-1}$, consider the canonical quotient map of the Klein bottle, $\pi\, \colon\ \Sigma_1^{\mathsf{ant}} \to K_2$. The following two functions will tell us the characteristic numbers of the elements in $a$:
\begin{align*}
    \epsilon\, \colon \ &\Sigma_1^{\mathsf{ant}} \times (K_2)^{n-1} \longrightarrow \mathbb{N} \\
    \epsilon(a) &= \begin{cases} 1 \quad \text{if } x\in E \\
    0 \quad \text{if } x \notin E 
    \end{cases}\\
    \chi_i \, \colon \ &\Sigma_1^{\mathsf{ant}} \times (K_2)^{n-1} \longrightarrow \mathbb{N} \\
    \chi_i(a) &= \begin{cases} 2 \quad \text{if } \pi^{-1}(z_i) = \{a_x,\, b_x\} \\
    1 \quad \text{if } \pi^{-1}(z_i) \cap M_x \subseteq A_x \setminus \{a_x,\, b_x\} \\
    0 \quad \text{if } \pi^{-1}(z_i) \cap M_x \subseteq M_x \setminus A_x
    \end{cases}\\
\end{align*}
Together, these functions combine to give the characteristic tuple of $a$.
\begin{align*}
    C \, \colon \ &\Sigma_1^{\mathsf{ant}} \times (K_2)^{n-1} \longrightarrow \mathbb{N}^{n} \\
    C(a) &= (\epsilon(a),\chi_1(a),\chi_2(a),\dots,\chi_{n-1}(a))
\end{align*}
The characteristic tuple $C(a)$ contains necessary information for combining domains of continuity. Furthermore, its components tell us the recipe for each $\alpha_i$. Recall, $\alpha_i$ is the path from $x$ to the class $z_i$ in the multipath $s(a)$. To ensure consistency regardless of representative, set $y_i$ as the only element in the set $\pi^{-1}(z_i) \cap (M_x \setminus C^I_x)$. Thus, $\alpha_i$ will be a path from $x$ to $y_i$. To begin, fix a clockwise orientation on all vertical circles in the torus.

When $\epsilon(a) = 0$ and $\chi_i(a) = 0$ or $\epsilon(a) = 1$ and $\chi_i(a) = 0$, $\alpha_i$ is the canonical path on the torus from $x$ to $y_i$. See Figure \ref{fig:1,1} for an example of the latter.

\begin{figure}
    \centering
    \includegraphics[height=7cm]{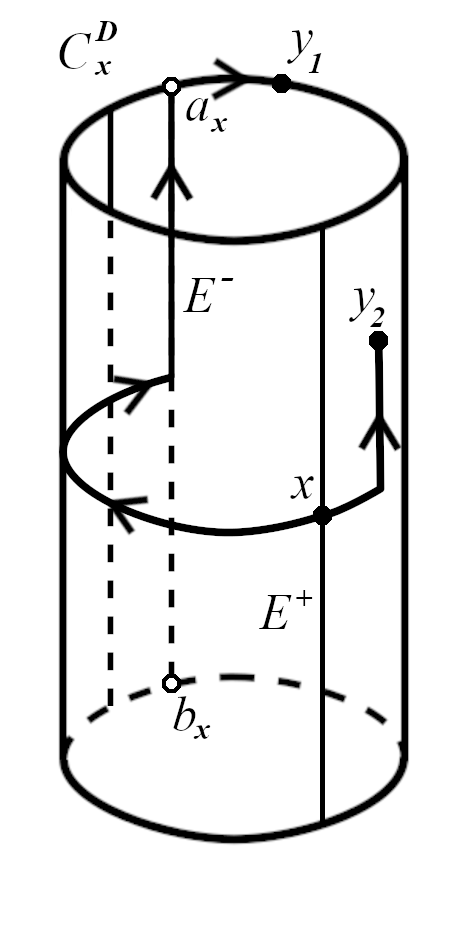}
    \caption{A multipath when $C(a) = (1,1,0)$.}
    \label{fig:1,1}
\end{figure}

When $\epsilon(a) = 0$ and $\chi_i(a) = 1$, $\alpha_i$ constitutes this path. Travel along the semicircle path from $(x',\, x'')$ to $(x',\, -x'')$ in a clockwise direction. Then, follow the arc from $(x',\, -x'')$ to $(y_i',\, -x'')$. If $y_i \notin C^D_x$, then $y_i''= -x''$ and you are finished. However, if $y_i \in C^D_x$, then follow the path from $(y_i',\, -x'')$ to $(y_i',\, y_i'')$ within $C^D_x$ s.t. the path does not contain $a_x$. See Figure \ref{fig:0,1} for two examples of $\alpha_i$ in this scenario.

\begin{figure}
    \centering
    \includegraphics[height=7cm]{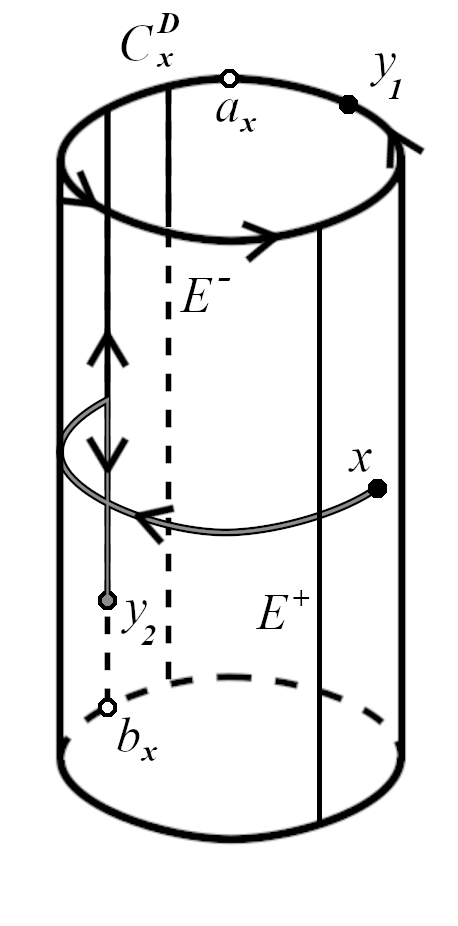}
    \caption{A multipath when $C(a) = (0,1,1)$.}
    \label{fig:0,1}
\end{figure}

When $\epsilon(a) = 0$ and $\chi_i(a) = 2$, construct $\alpha_i$ in the following way: first, travel along the semicircle path from $(x',\, x'')$ to $(x',\, -x'')$ in a clockwise direction. Then, traverse the geodesic from $(x',\, -x'')$ to $(y_i',\, -x'')$. Finally, follow a clockwise path from $(y_i',\, -x'')$ to $y_i = a_x$. See Figure \ref{fig:0,2,2}.

\begin{figure}
    \centering
    \includegraphics[height=7cm]{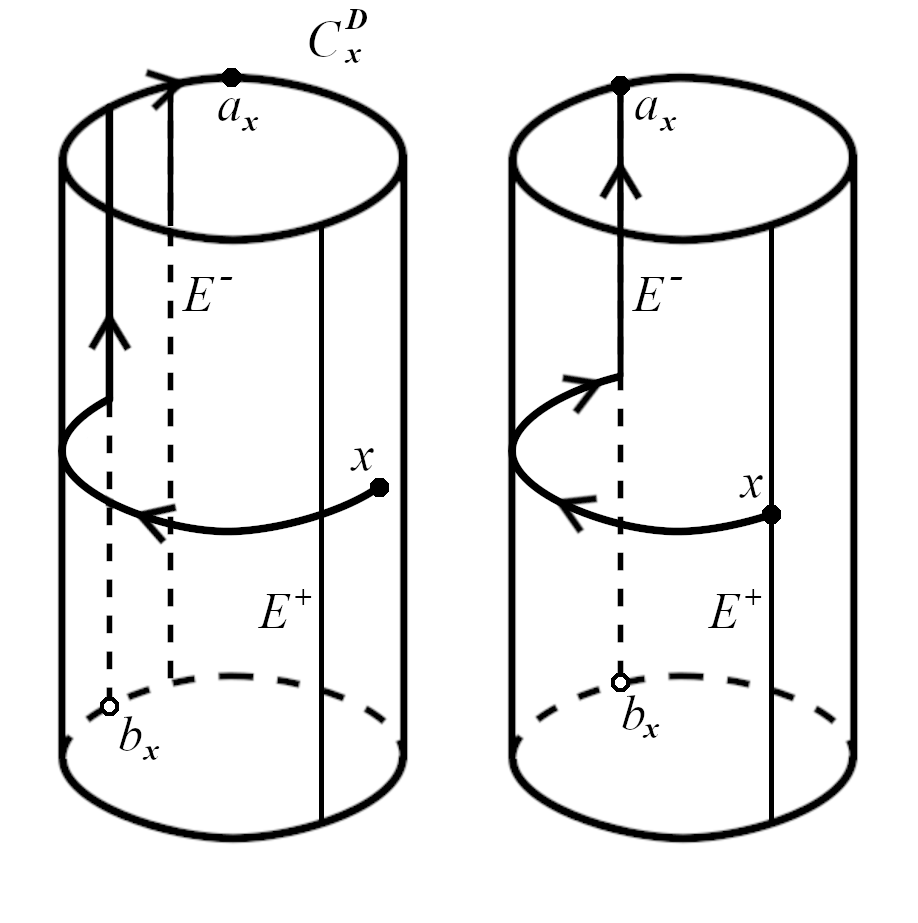}
    \caption{Multipaths when $C(a) = (0,2,2)$ and $C(a) = (1,2,2)$ respectively.}
    \label{fig:0,2,2}
\end{figure}

When $\epsilon(a) = 1$ and $\chi_i(a) = 1$, construct $\alpha_i$ by following the semicircle path from $(x',\, x'')$ to $(x',\, -x'')$ in a clockwise direction. Then, follow the arc from $(x',\, -x'')$ to $(y_i',\, -x'')$. If $y_i \notin C^D_x$, then $y_i''= -x''$ and you are finished. However, if $y_i \in C^D_x$, then follow the path clockwise from $(y_i',\, -x'')$ to $(y_i',\, y'')$ within $C^D_x$. Refer to Figure \ref{fig:1,1}.

When $\epsilon(a) = 1$ and $\chi_i(a) = 2$, $\alpha_i$ follows the semicircle path from $(x',\, x'')$ to $(x',\, -x'')$ in the clockwise direction. Then, travel on the shortest geodesic from $(x',\, -x'')$ to $y_i = a_x$. See Figure \ref{fig:0,2,2}.

As was shown in \cite{cggz}, each $\alpha_i$ recipe is continuous over any points with the same characteristic tuple. We now show that our higher effectual motion planner has $2n$ domains of continuity. We denote $\Sigma C(a)$ as the sum of the characteristic tuple $C(a)$. For example, if $C(a) = (1,1,\dots,1)$, then $\Sigma C(a) = n$. Construct a partition $\bigsqcup \{D_0, D_1, \dots, D_{2n-1}\} = \Sigma_1^{\mathsf{ant}} \times (K_2)^{n-1}$ and define $D_t = \{a \in \Sigma_1^{\mathsf{ant}} \times (K_2)^{n-1} \ | \ \Sigma C(a) = i\} $. Note that $0 \leq i \leq 2n-1$, and we may discard the requirement of open sets for topological complexity in favor of partitions since both $\Sigma_1^{\mathsf{ant}}$ and $K_2$ are compact, metric ANRs and $e$ is a fibration (see \cite{cggz} and \cite{pp}).

Suppose $\Sigma C(a) = \Sigma C(b) = t$ for $a \neq b$. Then either $C(a) = C(b)$ or $C(a) \neq C(b)$. In the first case, we can either construct a sequence $(a,a_1,a_2,\dots,a_r,\dots) \xrightarrow{}$ $b$ within $D_t$, or we cannot. If it is not possible, then $a$ and $b$ are not at risk of continuity issues. If it is possible, then construct such a sequence and note that $C(a) = C(a_j)$ for all $j$. Thus, the sequence is a series of perturbations of $a$, and we know that each component $c_x$ or $\alpha_i$ of the multipath $s(a)$ is continuous under perturbations of $a$, so long as the characteristic tuple remains constant. Thus, $s|_{D_t}$ will remain continuous in this case. Now suppose $C(a) \neq C(b)$; then, for some $j,k \leq n-1$, one of the following will hold:
\begin{align*}
    (1)&\ \epsilon(a) > \epsilon(b) \text{ and } \chi_j(a) < \chi_j(b) \\
    \text{or } (2)&\  \epsilon(a) < \epsilon(b) \text{ and } \chi_j(a) > \chi_j(b) \\
    \text{or } (3)&\ \chi_k(a) > \chi_k(b) \text{ and } \chi_j(a) < \chi_j(b) \\
    \text{or } (4)&\ \chi_k(a) < \chi_k(b) \text{ and } \chi_j(a) > \chi_j(b)
\end{align*} 
We will elaborate on $(1)$, and the rest follow by a similar argument. Since $\epsilon(a) > \epsilon(b)$, then the first element of $a$ belongs to the equator set $E$ and the first element of $b$ does not. We can conclude that a sequence starting from $a$ in $D_t$ cannot approach $b$, as any sequence will have the first element approaching the equator set. Similarly, since $\chi_j(a) < \chi_j(b)$, a sequence starting from the $b$ cannot approach $a$ because of the dimensional restriction to the $j+1$-th element of $b$. Thus, we conclude that disjoint subsets of $D_t$ have no limit point crossovers so $s|_{D_t}$ is continuous. Therefore, we have constructed a higher, effectual motion planner for the Torus with $2n$ domains of continuity. We conclude the proof as the motion planner demonstrates $TC_{\text{effl},n}^{\mathbb{Z}_2}(\Sigma_1^{\mathsf{ant}}) \leq 2n$.
\end{proof}


\section{Appendix: Motion Planning in Euclidean Spaces}

The motion planning problem in $\mathbb{R}^3$ in the presence of obstacles was first addressed in \cite{farber04}.
If $Q_r$ stands for a nonempty set of $r$ points in $\mathbb{R}^3$, then $\mathbb{R}^3 - Q_r$ is homotopy equivalent to a bouquet of 
2-dimensional spheres and hence $TC_2(\mathbb{R}^3 \setminus Q_r) = 3$. However, the motion planner described in \cite[Example 10.4]{farber04} 
is unstable in one of its domains. Here we revisit this planner and show how to fix it.

For brevity, we will describe only the motion planner in the domain where it is not stable.  Let $Q_r = \{ p_1,\ldots, p_r\}$ and let 
$F_2 \subseteq (\mathbb{R}^3 \setminus Q_r)^2$ be the set of all pairs $(a, b)$ such that the straight line segment $[a, b]$ intersects $Q_r$ but 
this segment is not parallel to the $z$-axis. Pick $\epsilon >0$ such that  $|x -y| > \epsilon$ for any two points $x,y$ in $Q_r$. 
The motion planner on $F_2$ is given as follows: given $(a,b)$ in $F_2$ go from $a$ along the straight line segment $[a, b]$ until the distance 
to one of the obstacles $p_{i_r}$ becomes $\epsilon/2$, then move along the upper semicircle of radius $\epsilon/2$ with center at 
$p_{i_r}$ lying in the plane that contains the points $a, b$ and is parallel to the $z$-axis; then continue traveling towards $b$
in this manner.

The issue is seen when we take a sequence of elements in $F_2$, such that the sequence begins with only one obstruction and ends with two. 
An example is shown in Figure \ref{fig:Farber}.\\

\begin{figure}[hbt!]
    \centering
    \includegraphics[height=8cm]{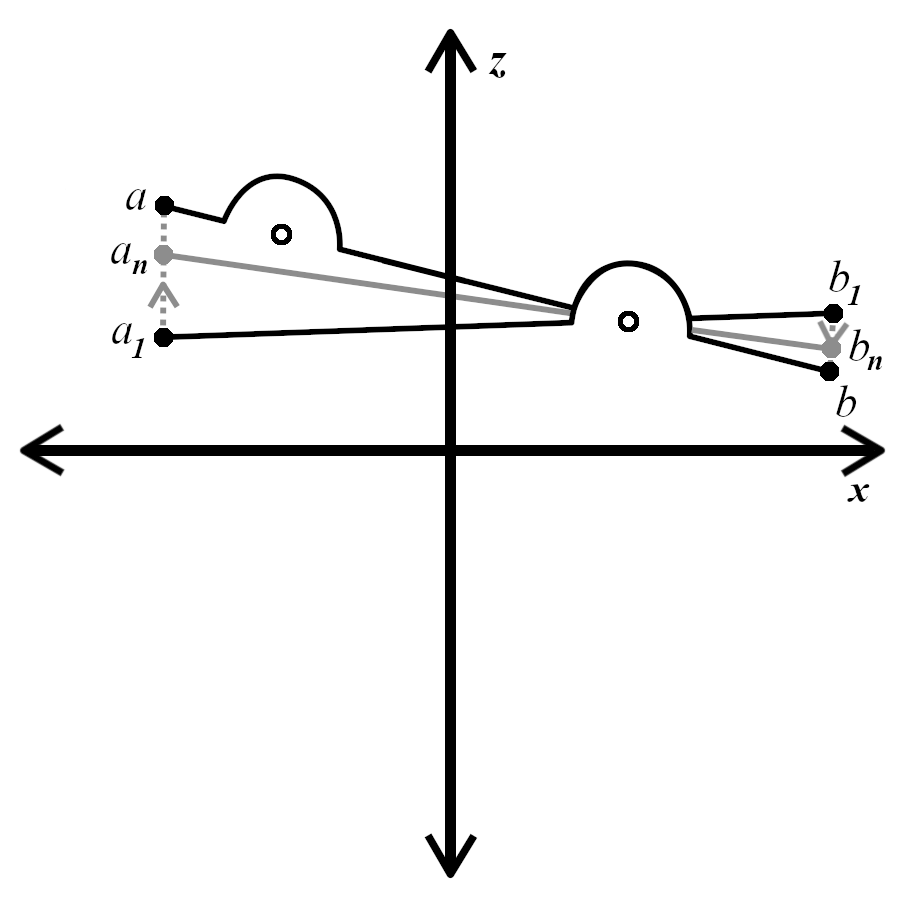}
    \caption{A perturbation of elements in $F_2$ as defined by Farber in \cite{farber04}.}
    \label{fig:Farber}
\end{figure}

Clearly, the sequence $(a_n,b_n)$ and its limit point $(a,b)$ live entirely within $F_2$. However, the paths generated by the motion planner 
do not converge to the path assigned to the limit point. Therefore, this planner is not continuous on $F_2$.

We will now describe an explicit motion planner for $\mathbb{R}^3 \setminus Q_r$ which we later extend to $\mathbb{R}^m \setminus Q_r$. 
Our motion planner will always send the robot to the origin before proceeding to the destination. 

We will assume, without loss of generality, that the set of obstacles $Q_r$ is positioned in $\mathbb{R}^3$ in such a way that $Q_r$ is not contained in a line that passes through the origin and for all $p \in Q_r$, $\pi_{xy}(p) \neq (0,0)$, where $\pi_{xy}$ is the projection onto the $xy$-plane. 

\begin{definition}
Given any obstacle point $p \in Q_r$, the radial $L_p$ is the line through the origin in $\mathbb{R}^m$ that contains the obstacle point $p$.
\end{definition}

Define $\mathcal{L}$ as the set of all radials for the obstacles in $Q_r$. Note that the set $\pi_{xy}(L)$ consists of a set of lines that go 
through the origin. Let $\theta$ be half of the smallest angle determined by these lines (see Figure~\ref{fig:angles}).

\begin{figure}
    \centering
    \includegraphics[height=9cm]{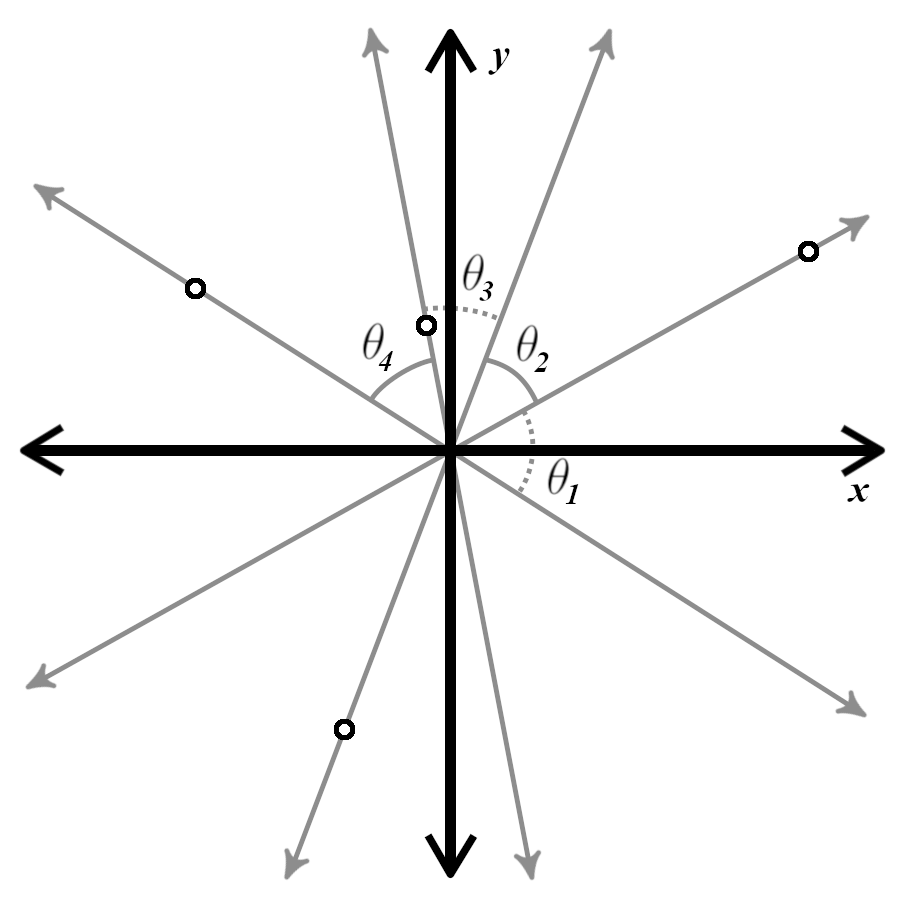}
    \caption{The angle between each projected radial in the $xy$-plane.}
    \label{fig:angles}
\end{figure}

Let $A_\theta$ be the orthogonal matrix that rotates $\mathbb{R}^3$ around the $z$-axis through an angle of $\theta$.  
Define a partition, $\{F_1,\, F_2,\,F_3\}$, on the configuration space $(\mathbb{R}^3\setminus Q_r) \times (\mathbb{R}^3\setminus Q_r)$ as follows: 
    
\begin{enumerate}
\item $F_1$ is the set of pairs $(a,b)$ such that neither $a$ nor $b$ lie on a radial.
\item $F_2$ is the set of pairs $(a,b)$ such that either $a$ lies on a radial or $b$ lies on a radial, but not both.
\item $F_3$ is the set of pairs $(a,b)$ such that both $a$ and $b$ lie on radials. 
\end{enumerate}

Now, we describe the motion planner. Suppose $(a,b) \in F_1$, then let $s_1(a,b)$ be the path from $a$ on the straight line segment to the 
origin and from the origin on the straight line segment to $b$. Suppose $(a,b) \in F_2$, then we have two possibilities. Either $a$ is on a 
radial and $b$ is not, or $b$ is on a radial and $a$ is not. In the first case, $s_2(a,b)$ defines the following path:

\[
s_2(a,b)(t) = \begin{cases}
    A_{4t\theta}a \quad &0 \leq t \leq \frac{1}{4}\\
    (2-4t)A_{\theta}a \quad &\frac{1}{4} \leq t \leq \frac{1}{2}\\
    (2t-1)b \quad &\frac{1}{2} \leq t \leq 1
\end{cases}
\]
This path rotates the point around the $z$-axis before proceeding to the origin. This ensures that the point escapes from the radial. In the second case, where $b$ lies on a radial, $s_2(a,b)(t)$ defines the following path:
\[
s_2(a,b)(t) = \begin{cases}
    (1-2t)a \quad &0 \leq t \leq \frac{1}{2} \\
    (4t-2)A_{\theta}b \quad &\frac{1}{2} \leq t \leq \frac{3}{4}\\
    A_{4(1-t)\theta}b \quad &\frac{3}{4} \leq t \leq 1\\
\end{cases}
\]
\begin{figure}
    \centering
    \includegraphics[height=8cm]{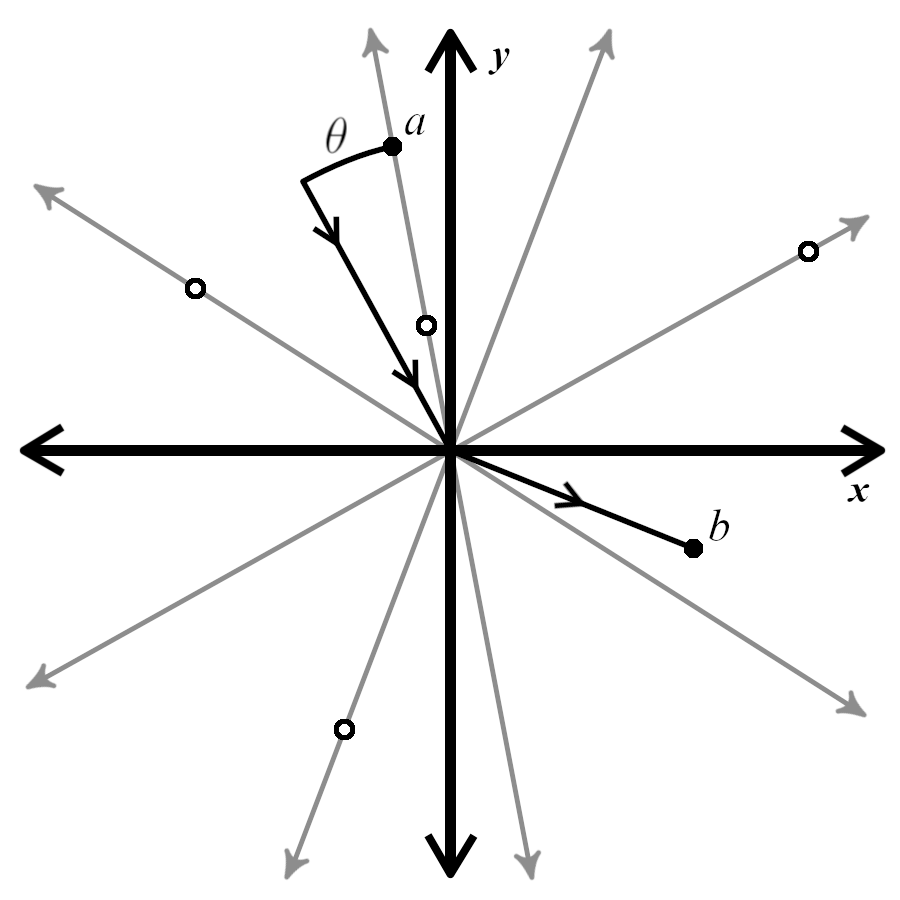}
    \caption{Motion planning for $s_2(a,b)$ projected onto the $xy$-plane.}
    \label{fig:xy-motion}
\end{figure}
See Figure \ref{fig:xy-motion} for an example. Suppose $(a,b) \in F_3$. Then, $s_3(a,b)(t)$ defines the following path:
\[
s_3(a,b)(t) = \begin{cases}
    A_{4t\theta}a \quad &0 \leq t \leq \frac{1}{4}\\
    (2-4t)A_{\theta}a \quad &\frac{1}{4} \leq t \leq \frac{1}{2}\\
    (4t-2)A_{\theta}b \quad &\frac{1}{2} \leq t \leq \frac{3}{4}\\
    A_{4(1-t)\theta}b \quad &\frac{3}{4} \leq t \leq 1\\
\end{cases}
\]

Note that every $s_j$ is well-defined. It is easy to see that $s_1$ and $s_3$ are continuous over their domains, and $s_2$ is continuous over each individual scenario above. However, we must show that $s_2$ is continuous over both subsets of $F_2$. Define $A$ as the set of points $(a,b) \in (\mathbb{R}^3\setminus Q_r)^2$ such that $a$ lies on a radial and $b$ does not, and $B = F_2 \setminus A$.

Consider any sequence $(a_j,b_j)$ in $A$. Note that $a_k$ for all $k$ must lie on a radial. Thus, the sequence must converge to an element $(a,b)$ with $a$ on a radial. Thus, the limit points of $A$ must be in $A$ or $F_3$. Therefore, $\partial A \subseteq F_3$.
Similarly, the limit points of $B$ must be in $B$ or $F_3$. Therefore, $\partial B \subseteq F_3$. Finally, we conclude that $s_2$ is a 
continuous motion planner since $F_2 = A \sqcup B$ and there are no limit point cross-overs between $A$ and $B$.

Thus, we have shown that our motion planner $s$ for $\mathbb{R}^3\setminus Q_r$ is stable when restricted to each $F_j$. Also note that $s$ is a symmetrized motion planner, as defined in \cite{bgrt}. In fact, it is possible to extend the above construction to $\mathbb{R}^m$ and also
to higher symmetrized TC. We proceed as follows: set $X = \mathbb{R}^m \setminus Q_r$ and take any $x \in X^n$. We denote the higher, 
symmetrized motion planner $\mathbf{S}\colon X^n \to M_n(X)$.  $\mathbf{S}(x)$ is a multipath made of paths that send each 
component $x_i$ of $x$ to the origin, according to the same instructions as the motion planner $s$ when $n=2$.

Now we let $F_i$ be the set of all  $x \in X^n$  such that $x$ has $i$ components on a radial line. This defines a partition
$F_0,F_1,\ldots,F_n$ of $X^n$. Furthermore, the continuity of $\mathbf{S}$ on each $F_i$ follows immediately 
from the continuity of $s$. We leave the details to the interested reader. This planner realizes the value 
$\mathsf{TC}^\Sigma_n(\mathbb{R}^m \setminus Q_r) = n+1$.


\end{document}